\definecolor{skyblue}{rgb}{0.85,0.85,1}
\renewcommand{\geq}{\geqslant}
\renewcommand{\leq}{\leqslant}
\newcommand{\w}{\omega}
\DeclareMathOperator{\disc}{disc}
\DeclareMathOperator{\res}{res}
\theoremstyle{plain}
\newtheorem{theorem}{Theorem}
\newtheorem{lemma}[theorem]{Lemma}
\numberwithin{equation}{section}
\theoremstyle{definition}
\newtheorem*{remark*}{Remark}
\newtheorem*{definition*}{Definition}
\title{Floquet theory and stability analysis for Hamiltonian PDEs}
\author[1]{Jared~C.~Bronski\thanks{E-mail:~bronski@illinois.edu. JCB would like to thank the University of Sydney and the Sydney Mathematical Research Institute for support during the writing of this paper.}}
\author[1]{Vera~Mikyoung~Hur\thanks{E-mail:~verahur@illinois.edu. VMH is supported by NSF DMS--2009981.}}
\author[2]{Robert~Marangell\thanks{E-mail:~robert.marangell@sydney.edu.au. RM is partially supported by ARC Discovery Project DP210101102.}}
\affil[1]{Department of Mathematics, University of Illinois Urbana-Champaign \protect\\ 
Urbana, IL 61801, USA}
\affil[2]{School of Mathematics and Statistics, University of Sydney \protect\\ Sydney, NSW 2006, Australia}
\begin{document}

\maketitle

\abstract{We analyze Floquet theory as it applies to the stability and instability of periodic traveling waves in Hamiltonian PDEs. Our investigation focuses on several examples of such PDEs, including the generalized KdV and BBM equations (third order), the nonlinear Schr\"odinger and Boussinesq equations (fourth order), and the Kawahara equation (fifth order).
Our analysis reveals that the characteristic polynomial of the monodromy matrix inherits symmetry from the underlying PDE, enabling us to determine the essential spectrum along the imaginary axis and bifurcations of the spectrum away from the axis, employing the Floquet discriminant. We present numerical evidence to support our analytical findings.}

\tableofcontents

\section{Introduction}\label{sec:intro}

The subject of the investigation here is the stability and instability of periodic traveling waves in Hamiltonian partial differential equations (PDEs) in one spatial dimension. This entails addressing spectral problems of the form
\begin{equation}\label{eqn:JH}
\lambda v=\boldsymbol{\mathcal{J}}\boldsymbol{\mathcal{L}},\qquad \lambda\in\mathbb{C},
\end{equation}
where $\boldsymbol{\mathcal{J}}$ represents a symplectic form, $\boldsymbol{\mathcal{L}}$ is a linear self-adjoint operator corresponding to the second variation of an appropriate Hamiltonian, and $\boldsymbol{\mathcal{J}}\boldsymbol{\mathcal{L}}$ has periodic coefficients. It is well-established that the $L^2(\mathbb{R})$ essential spectrum of such an operator remains invariant under the transformations
\[
\lambda \mapsto -\lambda\quad\text{and}\quad\lambda \mapsto \overline{\lambda},
\]
implying that the spectrum is symmetric with respect to reflections across the real and imaginary axes. Our emphasis lies in quasi-periodic eigenvalue problems that exhibit symmetry of the kind. Prior research on this subject can be found in \cite{JMMP1,JMMP2,JMMP3,BR1}, among many others, and \cite{MM} in particular. See also \cite{Panos_2023} for some similar, related results. 

We can reformulate \eqref{eqn:JH} as
\begin{equation}\label{eqn:A}
\mathbf{v}_x={\bf A}(x,\lambda)\mathbf{v},
\end{equation}
where $\mathbf{A}(x,\lambda)$ is an $n\times n$ matrix-valued function of $x\in\mathbb{R}$, satisfying $\mathbf{A}(x+T,\lambda)=\mathbf{A}(x,\lambda)$ for some $T>0$, the period. Throughout, we assume {\em generalized Hamiltonian symmetry}. That is,
\begin{itemize}
\item[(A1)] $\mathbf{A}(x,\lambda)$ is real for $\lambda\in\mathbb{R}$.
\item[(A2)] ${\bf A}^\top(x,\lambda){\bf B}(\lambda)=-{\bf B}(\lambda){\bf A}(x,-\lambda)$ for some matrix $\mathbf{B}(\lambda)$, independent of $x$, and nonsingular everywhere except for at most finitely many values of $\lambda$.  
\end{itemize}
These assumptions can be somewhat relaxed---for instance, for the generalized BBM equation---but they encompass the majority of relevant examples. Importantly, $n$ does not have to be even. Here we present examples for $n=3$ (such as the generalized KdV and BBM equations) and $n=5$ (such as the fifth-order KdV or Kawahara equation). Additionally, we remark that \text{(A2)} is weaker than the infinitesimal symplecticity assumption \cite{Jones1988,StarYak1975}. 
Among all the examples discussed herein, only the linearizations of the nonlinear Schr\"odinger equation about a trivial phase solution and the Boussinesq equation about a stationary solution satisfy infinitesimal symplecticity. For $n$ odd, infinitesimal symplecticity implies that $\mathbf{A}(x,\lambda)$ must be singular, but generalized Hamiltonian symmetry does not. 

We define the {\em monodromy matrix} of \eqref{eqn:A} as
\begin{equation}\label{def:M}
{\bf M}(\lambda)={\bf V}(T,\lambda),\quad\text{where}\quad
{\bf V}_x={\bf A}(x,\lambda){\bf V}\quad\text{and}\quad{\bf V}(0,\lambda)={\bf I}_n.
\end{equation}
Here $\mathbf{I}_n$ represents the $n\times n$ identity matrix. We define the {\em characteristic polynomial} of the monodromy matrix of \eqref{eqn:A} as
\begin{equation}\label{def:p}
p(\mu,\lambda)=\det(\mathbf{M}(\lambda)-\mu\mathbf{I}_n).
\end{equation}
The monodromy matrix and the characteristic polynomial inherit symmetry from the underlying ODE. This can prove particularly advantageous when investigating the stability and instability of periodic traveling waves in Hamiltonian PDEs.

\begin{lemma}[Generalized Hamiltonian symmetry]\label{lem:symm}
Suppose that $\mathbf{A}(x,\lambda)$ is an $n\times n$ matrix-valued function, periodic in $x$, and {\rm (A1)} and {\rm (A2)} hold true. Let $\mathbf{M}(\lambda)$ denote the monodromy matrix of \eqref{eqn:A}, and it must satisfy
\begin{equation}\label{eqn:M&B}
{\bf M}(\lambda)={\bf B}^{-\top}(\lambda){\bf M}^{-\top}(-\lambda){\bf B}^\top(\lambda).
\end{equation}

Additionally, suppose that $\tr(\mathbf{A}(x,\lambda))=0$ for all $x\in\mathbb{R}$ and $\lambda\in\mathbb{C}$. Let the characteristic polynomial of the monodromy matrix take the form
\[
p(\mu,\lambda)=\sum_{k=0}^n(-\mu)^k e_{n-k}(\lambda),
\]
where $e_k(\lambda)$ denotes the $k$-th elementary symmetric polynomial of the eigenvalues of $\mathbf{M}(\lambda)$. That is,
\[
e_0(\lambda)=1\quad\text{and}\quad
e_k(\lambda)=\sum_{1\leq j_1 < j_2 <\ldots j_k \leq n}\mu_{j_1}\mu_{j_2}\cdots \mu_{j_k}, \quad k=1,2,\dots,n,
\]
where $\mu_{j_1}, \mu_{j_2}, \dots, \mu_{j_k}$  are the eigenvalues of $\mathbf{M}(\lambda)$. The elementary symmetric polynomials must then satisfy
\begin{align}
&e_k(\lambda)=e_{n-k}(-\lambda),\quad k=0,1,2,\dots,n,\quad \text{for $\lambda \in \mathbb{C}$}\label{eqn:e_k(C)}
\intertext{and, in turn,}
&e_k(\lambda)=\overline{e_{n-k}(\lambda)},\quad k=0,1,2,\dots,n,\quad \text{for $\lambda\in i\mathbb{R}$}.\label{eqn:e_k(iR)}
\end{align}
Particularly, for $n$ even, $e_{n/2}(\lambda)$ is necessarily real. 
\end{lemma}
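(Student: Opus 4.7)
The plan is to prove the three claims in order: first establish \eqref{eqn:M&B} via a conserved matrix-valued quantity along the flow, then derive \eqref{eqn:e_k(C)} by combining \eqref{eqn:M&B} with unimodularity of $\mathbf{M}$, and finally obtain \eqref{eqn:e_k(iR)} and the reality of $e_{n/2}$ by layering in the reality assumption (A1).

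For \eqref{eqn:M&B}, I would introduce the candidate invariant $\mathbf{K}(x,\lambda):=\mathbf{V}^{\top}(x,\lambda)\mathbf{B}(\lambda)\mathbf{V}(x,-\lambda)$ built from the fundamental solution $\mathbf{V}$ of \eqref{eqn:A}. Differentiating and using $\mathbf{V}_x=\mathbf{A}(x,\lambda)\mathbf{V}$ for both factors gives
\[
\mathbf{K}_x=\mathbf{V}^{\top}(x,\lambda)\bigl[\mathbf{A}^{\top}(x,\lambda)\mathbf{B}(\lambda)+\mathbf{B}(\lambda)\mathbf{A}(x,-\lambda)\bigr]\mathbf{V}(x,-\lambda),
\]
and the bracket vanishes by (A2). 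Hence $\mathbf{K}(x,\lambda)\equiv\mathbf{K}(0,\lambda)=\mathbf{B}(\lambda)$, and evaluating at $x=T$ yields $\mathbf{M}^{\top}(\lambda)\mathbf{B}(\lambda)\mathbf{M}(-\lambda)=\mathbf{B}(\lambda)$. Rearranging (and using that $\mathbf{B}(\lambda)$ is invertible for all but finitely many $\lambda$, with the identity extending by continuity to those exceptional points provided $\mathbf{M}$ is continuous in $\lambda$) gives exactly \eqref{eqn:M&B}. This is the technical heart of the lemma; the only delicate point is handling the isolated singularities of $\mathbf{B}(\lambda)$.

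For \eqref{eqn:e_k(C)}, the trace-free hypothesis together with Abel's formula forces $\det\mathbf{M}(\lambda)=e_n(\lambda)=1$. The identity \eqref{eqn:M&B} shows $\mathbf{M}(\lambda)$ is similar to $\mathbf{M}^{-\top}(-\lambda)$, so the two matrices share elementary symmetric functions of their eigenvalues. Since the eigenvalues of $\mathbf{M}^{-\top}(-\lambda)$ are the reciprocals $\{1/\mu_j(-\lambda)\}$ of the eigenvalues of $\mathbf{M}(-\lambda)$, I would apply the standard reciprocal identity
\[
e_k\!\left(\mu_1^{-1},\dots,\mu_n^{-1}\right)=\frac{e_{n-k}(\mu_1,\dots,\mu_n)}{e_n(\mu_1,\dots,\mu_n)},
\]
which, combined with $e_n(-\lambda)=1$, collapses to $e_k(\lambda)=e_{n-k}(-\lambda)$.

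Finally, for \eqref{eqn:e_k(iR)} I would use (A1): because $\mathbf{A}(x,\lambda)$ is real for real $\lambda$ and depends polynomially (or analytically) on $\lambda$ in all examples of interest, Schwarz reflection gives $\overline{\mathbf{A}(x,\lambda)}=\mathbf{A}(x,\overline{\lambda})$; propagating through the ODE yields $\overline{\mathbf{M}(\lambda)}=\mathbf{M}(\overline{\lambda})$ and hence $\overline{e_k(\lambda)}=e_k(\overline{\lambda})$. Specializing to $\lambda\in i\mathbb{R}$, so that $\overline{\lambda}=-\lambda$, and inserting \eqref{eqn:e_k(C)} gives $\overline{e_k(\lambda)}=e_k(-\lambda)=e_{n-k}(\lambda)$, which is \eqref{eqn:e_k(iR)}. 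Setting $k=n/2$ for even $n$ forces $e_{n/2}(\lambda)$ to equal its own conjugate, hence to be real.
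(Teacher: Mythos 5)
Your proposal is correct and follows essentially the same route as the paper: your conserved quantity $\mathbf{V}^{\top}(x,\lambda)\mathbf{B}(\lambda)\mathbf{V}(x,-\lambda)$ is just a repackaging of the paper's observation that $\mathbf{B}^{-1}(\lambda)\mathbf{V}^{-\top}(x,\lambda)\mathbf{B}(\lambda)$ and $\mathbf{V}(x,-\lambda)$ solve the same initial-value problem, and your reciprocal identity for elementary symmetric polynomials is the same computation the paper performs directly on $\det(\mathbf{M}(-\lambda)-\mu\mathbf{I}_n)$ using $\det\mathbf{M}(\lambda)=1$ from Abel's formula. Your explicit Schwarz-reflection step for \eqref{eqn:e_k(iR)} (and the caveat about the finitely many singular values of $\mathbf{B}(\lambda)$) fills in details the paper leaves implicit, but does not change the argument.
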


\begin{proof}
Recalling \eqref{def:M}, we observe that $\mathbf{V}^{-\top}(x,\lambda)$ satisfies
\[
\mathbf{V}^{-\top}_x(x,\lambda)=-\mathbf{A}^\top(x,\lambda)\mathbf{V}^{-\top}(x,\lambda) \quad\text{and}\quad \mathbf{V}^{-\top}(0,\lambda)=\mathbf{I}_n,
\]
whence $\mathbf{W}(x,\lambda):=\mathbf{B}^{-1}(\lambda)\mathbf{V}^{-\top}(x,\lambda)\mathbf{B}(\lambda)$ satisfies
\[
\mathbf{W}_x(x,\lambda)=-\mathbf{B}^{-1}(\lambda)\mathbf{A}^\top(x,\lambda)\mathbf{B}(\lambda)\mathbf{W}(x,\lambda)=\mathbf{A}(x,-\lambda)\mathbf{W}(x,\lambda) \quad\text{and}\quad \mathbf{W}(0,\lambda)=\mathbf{I}_n,
\]
by \text{(A2)}. Additionally, recalling \eqref{def:M}, we observe that $\mathbf{V}(x,-\lambda)$ satisfies
\[
\mathbf{V}_x(x,-\lambda)=\mathbf{A}(x,-\lambda)\mathbf{V}(x,-\lambda) \quad\text{and}\quad \mathbf{V}(0,-\lambda)=\mathbf{I}_n. 
\]
Therefore, \eqref{eqn:M&B} follows by uniqueness. 

Suppose that $\tr(\mathbf{A}(x,\lambda))=0$ for all $x\in\mathbb{R}$ and $\lambda\in\mathbb{C}$, and we observe
\begin{align*}
\sum_{k=0}^n(-\mu)^ke_{n-k}(-\lambda)=&\det(\mathbf{M}(-\lambda)-\mu\mathbf{I}_n)
=\det(\mathbf{M}^{-1}(\lambda)-\mu\mathbf{I}_n)\\
=&(-\mu)^n\det(\mathbf{M}^{-1}(\lambda))\det(\mathbf{M}(\lambda)-\mu^{-1}\mathbf{I}_n)=\sum_{k=0}^n(-\mu)^ke_k(\lambda).
\end{align*}
Therefore, \eqref{eqn:e_k(C)} follows. Here the second equality follows from generalized Hamiltonian symmetry of the monodromy matrix, and the last equality follows because 
\[
\det(\mathbf{V}(x,\lambda))=\exp\left(\int^x_0\tr(\mathbf{A}(x,\lambda))~dx\right)\det(\mathbf{V}(0,\lambda))=\det(\mathbf{V}(0,\lambda)),
\]
by hypothesis. This completes the proof.
\end{proof}

Note that $e_k(\lambda)$ can be related to $\tr({\bf M}^j(\lambda))$, $j=1,2,\dots,k$, using the well-known Newton formula as
\begin{equation}\label{eqn:e&tr}
ke_k(\lambda)=\sum_{j=1}^k(-1)^{j-1}e_{k-j}(\lambda)\tr({\bf M}^j(\lambda)).
\end{equation}
We will primarily work with $e_k(\lambda)$, but \eqref{eqn:e&tr} allows us to express them in terms of $\tr(\mathbf{M}^k(\lambda))$, which can prove particularly advatageous for numerical computations. 

Lemma~\ref{lem:symm} implies---perhaps not surprisingly---that if \eqref{eqn:A} exhibits generalized Hamiltonian symmetry then only half of the invariants of $\mathbf{M}(\lambda)$, $\lambda \in i\mathbb{R}$, are linearly independent. However, the full implications of this observation have not been thoroughly explored. Our interest here lies in the spectrum along the imaginary axis, which holds significant importance when investigating the stability and instability of periodic traveling waves in Hamiltonian PDEs. Our objective is to determine the algebraic multiplicity of the $L^2(\mathbb{R})$ essential spectrum of \eqref{eqn:JH} on the imaginary axis, as well as the bifurcation of the spectrum away from the axis, by utilizing the {\em Floquet discriminant} $\boldsymbol{f}:i{\mathbb R} \to {\mathbb R}^{n-1}$, defined element-wise as
\[
f_k(\lambda)=\begin{cases}\Re(e_{(k+1)/2}(\lambda)),\quad & 1\leq k \leq n-1, \text{odd}, \\
\Im(e_{k/2}(\lambda)),\quad & 1\leq k \leq n-1, \text{even}.\end{cases}
\]
We identify $\mathbb{R}^2$ with $\mathbb{C}$ whenever convenient, and $\boldsymbol{f}:i{\mathbb R} \to \begin{cases}{\mathbb C}^{(n-1)/2}, \quad & n \text{~~odd}, \\
{\mathbb C}^{n/2-1}\times {\mathbb R}, \quad & n \text{~~even}.
\end{cases}$ 
For $n=2$, note from \eqref{eqn:e_k(iR)} and \eqref{eqn:e&tr} that the Floquet discriminant becomes $\tr(\mathbf{M}(\lambda))$, which is real for $\lambda\in i\mathbb{R}$. For $n=3$, $\tr(\mathbf{M}(\lambda))$ takes values in $\mathbb{C}$, and for $n=4$, the Floquet discriminant consists of $\tr(\mathbf{M}(\lambda))$ and $\tfrac12(\tr(\mathbf{M}(\lambda))^2-\tr(\mathbf{M}^2(\lambda)))$. The latter is necessarily real.

\subsection{Spectrum on the imaginary axis}\label{sec:imaginary axis}

If the monodromy matrix of \eqref{eqn:A}, evaluated at $\lambda \in \mathbb{C}$, possesses $m$ simple eigenvalues along the unit circle---that is, the characteristic polynomial of the monodromy matrix has $m$ simple roots on the unit circle---then it follows from the Floquet theorem that $\lambda$ belongs to the $L^2(\mathbb{R})$ essential spectrum of \eqref{eqn:JH} with an algebraic multiplicity $m$. Consequently, our interest lies in determining the number of eigenvalues of the monodromy matrix on the unit circle. 
This task can be intricate, but we will explore how generalized Hamiltonian symmetry can facilitate the process.

Suppose that $\mathbf{A}(x,\lambda)$ is an $n\times n$ matrix-valued function, periodic in $x$, satisfying {\rm (A1)} and {\rm (A2)}. We deduce from \eqref{eqn:e_k(iR)} that the characteristic polynomial of the monodromy matrix of \eqref{eqn:A} obeys
\[
p(\mu,\lambda)=\mu^n \overline{p\left(\tfrac{1}{\overline{\mu}},\lambda\right)} \quad \text{for $\lambda\in i\mathbb{R}$},
\]
whence if $\mu$ is a root of $p(\cdot,\lambda)$ for $\lambda\in i\mathbb{R}$ then $\tfrac{1}{\overline{\mu}}$ is also a root necessarily with the same multiplicity. In other words, the roots of $p(\cdot,\lambda)$ for $\lambda \in i\mathbb{R}$ are symmetric with respect to reflection across the unit circle. To determine the number of roots on the unit circle, we exploit the fact that linear fractional transformations map generalized circles to generalized circles. More specifically, we introduce
\begin{equation}\label{def:pp}
p^\sharp(\nu,\lambda):=(1- i\nu)^n p\left(\frac{1+i\nu}{1 -i\nu},\lambda\right),
\end{equation}
which is a real polynomial for $\lambda\in i\mathbb{R}$, and the real roots of $p^\sharp(\nu,\lambda)$ for $\lambda\in i\mathbb{R}$ correspond to the roots of $p(\mu,\lambda)$ on the unit circle. (Care must be taken, though, when accounting for roots at $\infty$.) Importantly, the discriminant transforms under the action of $\operatorname{SL}(2,{\mathbb C})$. Specifically, if $p$ is a polynomial of degree $n$ then
\[
\disc_\nu\left((c+d\nu)^n p\left(\frac{a+b\nu}{c+d\nu}\right)\right)=(ad-bc)^{n(n-1)}\disc_\mu p(\mu),
\]
where ``$\disc$'' means the discriminant. For this and other facts concerning the classical discriminant we refer the reader to the text of Gelfand, Kapranov and Zelevinsky\cite{GKZ}. Consequently, $\disc_\nu p^\sharp(\nu,\lambda)=(-2i)^{n(n-1)}\disc_\mu p(\mu,\lambda)$, and conditions involving the sign of the discriminant of $p(\mu,\lambda)$ can be expressed in terms of the sign of $\disc_\nu p^\sharp(\nu,\lambda)$. (Care must be taken, though, because the degree of $p^\sharp$ is less than the degree of $p$ when $p$ has a root of $-1$.) It is important to note that the discriminant of $p^\sharp(\nu,\lambda)$ and, hence, $\disc_\mu p(\mu,\lambda)$ are real for $\lambda \in i\mathbb{R}$. See, for instance, \cite[Chapter~12]{GKZ} for more on discriminant and resultant. 

Consequently, the task of counting the number of roots on the unit circle of the characteristic polynomial $p(\mu,\lambda)$ for $\lambda \in i\mathbb{R}$ becomes equivalent to counting the number of real roots of the real polynomial $p^\sharp(\nu,\lambda)$, defined in \eqref{def:pp}. Such counting can be accomplished, for instance, by utilizing the Sturm sequence of a polynomial. When dealing with real polynomials with a degree $\leq 3$, the number of real roots can be determined by inspecting the discriminant of the polynomial. For polynomials with a degree $\geq 4$, additional auxiliary quantities, which depend polynomially on the coefficients of the polynomial, must be taken into account, alongside the discriminant. 

Let's examine a somewhat elementary example, involving a second-order equation 
\begin{equation}\label{eqn:JH2}
v_{xx}+Q(x)v=\lambda^2v, 
\end{equation}
where $Q(x)$ is a real-valued and periodic function. We remark that the left side of \eqref{eqn:JH2} makes a self-adjoint operator. We can reformulate \eqref{eqn:JH2} as
\begin{equation}\label{eqn:A2}
\mathbf{v}_x=\begin{pmatrix}
0 & \lambda \\
\lambda - Q(x)/\lambda & 0 
\end{pmatrix}\mathbf{v}=:\mathbf{A}(x,\lambda)\mathbf{v},
\qquad \lambda \in\mathbb{C},
\end{equation}
and we confirm that (A1) and (A2) hold true for $\mathbf{B}=\begin{pmatrix}
0 & 1 \\ 1 & 0 \end{pmatrix}$. Incidentally, \eqref{eqn:A2} does not take the usual symplectic form of the spectral problem for the Schr\"odinger operator, for which $\mathbf{A}^\top(x,\lambda)\mathbf{B}=-\mathbf{B}\mathbf{A}(x,\lambda)$ for some skew-symmetric matrix $\mathbf{B}$. 

Let $\mathbf{M}(\lambda)$ denote the $2\times 2$ monodromy matrix of \eqref{eqn:A2}. The characteristic polynomial of the monodromy matrix takes the form
\[
p(\mu,\lambda)=\mu^2-\tr({\bf M}(\lambda))\mu+1,
\]
and \eqref{def:pp} becomes $p^\sharp(\nu,\lambda)=-(2+\tr(\mathbf{M}(\lambda)))\nu^2+2-\tr(\mathbf{M}(\lambda))$. Consequently,
\[
\disc_\nu p^\sharp(\nu,\lambda)= 4(4-\tr({\bf M}(\lambda))^2).  
\]
This reproduces the well-known result \cite{MagnusWinkler1966,Eastham1973,StarYak1975} that the $L^2(\mathbb{R})$ essential spectrum of \eqref{eqn:A2} has a band when the Floquet discriminant $\tr({\bf M}(\lambda))$, which is real for $\lambda \in i\mathbb{R}$, lies within the interval $(-2,2)$, a band edge with a double eigenvalue of $1$ or $-1$ when $\tr({\bf M}(\lambda))=\pm2$, and a gap otherwise.  

When dealing with higher-order equations that exhibit generalized Hamiltonian symmetry, we wish to likewise establish that the monodromy matrix of \eqref{eqn:A} at $\lambda \in i\mathbb{R}$ has $m$ simple eigenvalues on the unit circle, or, equivalently, $\lambda$ belongs to the $L^2(\mathbb{R})$ essential spectrum of \eqref{eqn:JH} with an algebraic multiplicity $m$, if and only if the Floquet discriminant lies within a region $\varOmega_m\subset \mathbb{R}^{n-1}$, which can be explicitly computed. For example, for $n=3$ and considering the generalized KdV equation, we can demonstrate that $\varOmega_3 \subset \mathbb{R}^2$ makes to a deltoidal region, bounded by a hypocycloid with three cusps, and $\lambda \in i\mathbb{R}$ is in the $L^2(\mathbb{R})$ essential spectrum with an algebraic multiplicity three if and only if the Floquet discriminant $\tr(\mathbf{M}(\lambda))$ lies within $\varOmega_3$. Similarly, for $n=4$ and the nonlinear Schr\"odinger equation, $\varOmega_4 \subset \mathbb{R}^3$ makes a tetrahedral region with four cusps. Theorems~\ref{thm:spec3} and \ref{thm:spec4} provide further details.

\subsection{Bifurcation of the spectrum away from the imaginary axis}\label{sec:bifurcation} 

We are also interested in Hopf bifurcations of the $L^2(\mathbb{R})$ essential spectrum of \eqref{eqn:JH} away from the imaginary axis, which can manifest not only in the vicinity of the origin in the complex plane---namely, modulational instability---but also away from $0\in\mathbb{C}$. This holds significant importance when investigating the stability and instability of periodic traveling waves in Hamiltonian PDEs. For second-order equations, the third author and collaborators \cite{JMMP1,JMMP2,JMMP3,MM} have offered a comprehensive theoretical explanation of such bifurcations. For higher-order equations, however, our understanding remains limited, and only a few results {\bf CITE!} are available for small amplitudes, relying on perturbative techniques. Detecting all the spectrum off the imaginary axis for equations of all orders could be an insurmountable task. Here our objective is to derive a {\em bifurcation index}, a polynomial in the Floquet discriminant and its derivative, which can be utilized to locate points along the imaginary axis where bifurcations of the spectrum away from the axis may occur.   

We continue assuming that $\mathbf{A}(x,\lambda)$ is an $n\times n$ matrix-valued function, periodic in $x$, and {\rm (A1)} and {\rm (A2)} hold true. Suppose that the characteristic polynomial of the monodromy matrix of \eqref{eqn:A} satisfies
\[
p(\mu_0,\lambda_0)=0\quad \text{for some $\mu_0\in\mathbb{C}$, $|\mu_0|=1$,}\quad\text{for some $\lambda_0 \in i\mathbb{R}$}.
\]
(More generally, the characteristic polynomial may vanish on subsets of the unit circle, each with co-dimension one.) 
It then follows from the implicit function theorem that we can solve $p(\mu,\lambda)=0$ uniquely for $\lambda$ as a function of $\mu$ in a neighborhood of $\mu_0$ and $\lambda_0$, as long as $p_\lambda(\mu_0,\lambda_0)\neq0$. Consequently, a necessary condition for the $L^2(\mathbb{R})$ essential spectrum of \eqref{eqn:JH} to bifurcate at $\lambda_0\in i\mathbb{R}$ away from the imaginary axis in a transversal manner---in addition to the spectrum on the axis itself---is
\[
p_\lambda(\mu_0,\lambda_0)=0.
\]
Since both $p(\mu,\lambda)$ and $p_\lambda(\mu,\lambda)$ are polynomials in $\mu$, they will simultaneously vanish if and only if their resultant with respect to $\mu$ vanishes. Therefore, a necessary condition for the bifurcation of the spectrum away from the imaginary axis is
\begin{equation}\label{eqn:res}
\res_\mu(p(\mu,\lambda),p_\lambda(\mu,\lambda))=0\quad\text{for some $\lambda\in i\mathbb{R}$},
\end{equation}
where ``$\res$" means the resultant.

For example, let's consider \eqref{eqn:A2} and a straightforward calculation reveals that
\[
\res_\mu(p(\mu,\lambda),p_\lambda(\mu,\lambda))=\tr({\bf M}_\lambda(\lambda))^2.
\]
Indeed, if the $L^2(\mathbb{R})$ essential spectrum of \eqref{eqn:A2} bifurcates at $\lambda \in i\mathbb{R}$ away from the imaginary axis then $\tr({\bf M}_\lambda(\lambda))=0$. Conversely, it is well-known \cite{MagnusWinkler1966} that the Floquet discriminant for a second-order self-joint operator is monotonous within bands. 

For third-order equations, under some additional assumptions, we can establish that \eqref{eqn:res} is also a sufficient condition for the bifurcation of the spectrum away from the imaginary axis. Theorem~\ref{thm:bifur3} provides further details.

\subsection{Main results}\label{sec:result}

For second-order self-adjoint operators, such as \eqref{eqn:A2}, the Floquet discriminant, the trace of the monodromy matrix, can be used to determine whether the $L^2(\mathbb{R})$ essential spectrum has a band or a gap. For third-order and fourth-order equations that exhibit generalized Hamiltonian symmetry, the Floquet discriminant can likewise determine the algebraic multiplicity of the spectrum along the imaginary axis. 

\begin{theorem}[Spectrum on the imaginary axis for third-order equations]\label{thm:spec3}
Suppose that $\mathbf{A}(x,\lambda)$ is a $3\times 3$ matrix-valued function, periodic in $x$, satisfying {\rm (A1)} and {\rm (A2)}. Suppose that $\tr(\mathbf{A}(x,\lambda))=0$ for all $x\in\mathbb{R}$ and $\lambda\in\mathbb{C}$. Let $\mathbf{M}(\lambda)$ denote the monodromy matrix of \eqref{eqn:A}, and $\mathbf{M}(\lambda)$ for $\lambda \in i\mathbb{R}$ must have either one or three eigenvalues along the unit circle, counted by algebraic multiplicity.

Recall that the Floquet discriminant is $\boldsymbol{f}(\lambda)=\tr(\mathbf{M}(\lambda))$ for $\lambda \in i\mathbb{R}$, and we write $\boldsymbol{f}(\lambda)=f_1(\lambda)+if_2(\lambda)$. Let
\begin{equation}\label{def:disc3}
\begin{aligned}
\Delta_3(\lambda)=&|\boldsymbol{f}(\lambda)|^4-4\boldsymbol{f}(\lambda)^3-4\overline{\boldsymbol{f}(\lambda)}^3+18|\boldsymbol{f}(\lambda)|^2-27 \\ 
=&((f_1^2+f_2^2)^2-8(f_1^3-3f_1f_2^2)+18(f_1^2+f_2^2)-27)(\lambda),
\end{aligned}
\end{equation}
and the followings hold true:
\begin{itemize}
\item If $\Delta_3(\lambda)<0$ then $\mathbf{M}(\lambda)$ has three distinct eigenvalues on the unit circle, implying that $\lambda \in i\mathbb{R}$ belongs to the $L^2(\mathbb{R})$ essential spectrum of \eqref{eqn:JH} with an algebraic multiplicity three.
\item If $\Delta_3(\lambda)>0$, on the other hand, then $\mathbf{M}(\lambda)$ has three eigenvalues: one on the unit circle, one inside the unit circle, and one outside the unit circle. The latter two have the same argument.
\item If $\Delta_3(\lambda)=0$ then $\mathbf{M}(\lambda)$ has three eigenvalues on the unit circle, counted with their algebraic multiplicities, and at least two of them are degenerate. 
\end{itemize}

Alternatively, let 
\begin{equation}\label{def:deltoid}
\varGamma=\{2e^{i\theta}+e^{-2i\theta} \in \mathbb{C}:\theta \in [-\pi,\pi]\}
\end{equation}
denote the deltoid curve or a Steiner curve, a hypocycloid with three cusps, and the followings hold true:
\begin{itemize}
\item If $\boldsymbol{f}(\lambda)$ lies inside $\varGamma$, the connected component of $\mathbb{C}\setminus\varGamma$ containing $0$, then $\mathbf{M}(\lambda)$ has three distinct eigenvalues on the unit circle.
\item If $\boldsymbol{f}(\lambda)$ lies outside $\varGamma$, the connected component of $\mathbb{C}\setminus\varGamma$ containing $\infty$, then $\mathbf{M}(\lambda)$ has three eigenvalues: one on the unit circle, one inside the unit circle, and one outside the unit circle.
\item If $\boldsymbol{f}(\lambda)$ lies on $\varGamma$ then $\mathbf{M}(\lambda)$ has three eigenvalues on the unit circle and at least one eigenvalue has a higher multiplicity. Specifically, one eigenvalue has an algebraic multiplicity one and another has a multiplicity two unless $\boldsymbol{f}(\lambda)$ coincides with one of the cusps of $\varGamma$---$3$, $3e^{2\pi i/3}$, and $3e^{4\pi i/3}$---in which cases $1$, $e^{2\pi i/3}$, and $e^{4\pi i/3}$ respectively are eigenvalues with an algebraic multiplicity three.  
\end{itemize}
\end{theorem}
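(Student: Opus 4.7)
The plan is to reduce the eigenvalue count on the unit circle to the classical discriminant analysis of a real cubic. Combining Lemma~\ref{lem:symm} with the hypothesis $\tr(\mathbf{A}(x,\lambda))=0$ (so that $\det\mathbf{M}(\lambda)\equiv 1$, hence $e_3(\lambda)=1$ and $e_2(\lambda)=\overline{e_1(\lambda)}$ on $i\mathbb{R}$) yields, for $\lambda\in i\mathbb{R}$,
\[
p(\mu,\lambda)=-\mu^3+f\mu^2-\bar f\mu+1,\qquad f:=\boldsymbol{f}(\lambda)=\tr\mathbf{M}(\lambda).
\]
A direct check gives $\mu^3\,\overline{p(1/\bar\mu,\lambda)}=-p(\mu,\lambda)$, so the roots of $p(\cdot,\lambda)$ are invariant (with multiplicity) under the inversion $\mu\mapsto 1/\bar\mu$ through the unit circle. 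Off-circle roots therefore occur in distinct pairs $\{\mu_0,1/\bar\mu_0\}$, and since $n=3$ is odd, the number of roots on $|\mu|=1$ must be $1$ or $3$. In the first case, the two off-circle roots are $\mu_0=re^{i\phi}$ and $1/\bar\mu_0=(1/r)e^{i\phi}$, which share the same argument, confirming the corresponding bullet.

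Next I would apply the Cayley-type substitution $\mu=(1+i\nu)/(1-i\nu)$ from \eqref{def:pp}. A short expansion gives $p^\sharp(\nu,\lambda)=2i\,q(\nu,\lambda)$ where
\[
q(\nu,\lambda)=(1+f_1)\nu^3+f_2\nu^2+(f_1-3)\nu+f_2,\qquad f=f_1+if_2,
\]
is a \emph{real} cubic. The substitution restricts to a bijection between $|\mu|=1$ and $\mathbb{R}\cup\{\infty\}$ (with $-1\leftrightarrow\infty$), so the number of roots of $p$ on the unit circle equals the number of real roots of $q$, multiplicities included. A direct expansion of the cubic discriminant from the coefficients $(-1,f,-\bar f,1)$ of $p$ gives precisely $\disc_\mu p(\mu,\lambda)=\Delta_3(\lambda)$ as in \eqref{def:disc3}. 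Combined with the $\operatorname{SL}(2,\mathbb{C})$-scaling $\disc_\nu p^\sharp=(-2i)^{6}\disc_\mu p=-64\,\Delta_3$ and the elementary rescaling $\disc_\nu(2iq)=(2i)^{4}\disc_\nu q=16\,\disc_\nu q$, this yields $\disc_\nu q=-4\,\Delta_3$.

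The standard sign trichotomy for a real cubic then translates directly: $\Delta_3<0$ corresponds to three distinct real roots of $q$ and hence three distinct eigenvalues of $\mathbf{M}$ on $|\mu|=1$; $\Delta_3>0$ to one real root together with two complex-conjugate roots of $q$, equivalently one eigenvalue on the circle plus an off-circle inversion pair $\{\mu_0,1/\bar\mu_0\}$; $\Delta_3=0$ to a repeated real root of $q$, equivalently a repeated root of $p$, which must lie on $|\mu|=1$ because a cubic cannot accommodate two distinct off-circle inversion pairs.

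Finally, to obtain the deltoid reformulation, apply Vieta's relations to a double root $e^{i\theta}$ of $p$: the third root is forced to be $e^{-2i\theta}$ (from $\mu_1\mu_2\mu_3=1$), whence $f=2e^{i\theta}+e^{-2i\theta}$, so $\{\Delta_3=0\}=\varGamma$. Since $\Delta_3$ is a real polynomial in $(f_1,f_2)$ vanishing exactly on $\varGamma$, its sign is constant on each of the two connected components of $\mathbb{C}\setminus\varGamma$, and the test values $\Delta_3(0)=-27<0$ and $\Delta_3(f)>0$ for $|f|$ large identify ``inside'' with $\{\Delta_3<0\}$ and ``outside'' with $\{\Delta_3>0\}$. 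A triple root arises exactly when $p(\mu)=-(\mu-\mu_0)^3$, which forces $\mu_0^3=1$ and $f=3\mu_0\in\{3,3e^{2\pi i/3},3e^{4\pi i/3}\}$, the three cusps of $\varGamma$. The main bookkeeping delicacy will be the degenerate case $f_1=-1$, in which the LFT drops the degree of $q$ because $\mu=-1$ is a root of $p$; this is handled by the explicit factorization $p(\mu)=(\mu+1)(-\mu^2+if_2\mu+1)$, after which the classification on the unit circle follows by inspecting the resulting quadratic directly.
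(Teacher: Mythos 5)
Your proposal is correct and follows essentially the same route as the paper: derive the palindromic-conjugate form of $p(\mu,\lambda)$ from Lemma~\ref{lem:symm}, use the inversion symmetry $\mu\mapsto 1/\overline{\mu}$ to get the $1$-or-$3$ count, and read off the trichotomy from the sign of $\disc_\mu p=\Delta_3$, identifying $\{\Delta_3=0\}$ with the deltoid via the double-root parametrization. The only difference is one of completeness rather than substance: you make explicit the Cayley-transform reduction to the real cubic $q$ (which the paper only sketches in Section~\ref{sec:imaginary axis} and does not carry out in the proof itself), you justify the sign-to-count correspondence via the real-cubic trichotomy and test values rather than the paper's terser continuity argument, and you flag the degree-drop case $f_1=-1$ that the paper does not address.
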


\begin{proof}
The proof begins by observing that the characteristic polynomial of the monodromy matrix of \eqref{eqn:A} takes the form
\begin{align}
p(\mu,\lambda)=&-\mu^3+\boldsymbol{f}(\lambda)\mu^2-\boldsymbol{f}(-\lambda)\mu+1 , \qquad \lambda\in\mathbb{C}, \notag 
\intertext{where $\boldsymbol{f}(\lambda)=\tr(\mathbf{M}(\lambda))$, and}
p(\mu,\lambda)=&-\mu^3+\boldsymbol{f}(\lambda)\mu^2-\overline{\boldsymbol{f}(\lambda)}\mu+1, \qquad \lambda \in i\mathbb{R}, \label{def:p3}
\end{align}
by \eqref{eqn:e_k(C)} and \eqref{eqn:e_k(iR)}. Consequently, if $\mu$ is a root of $p(\cdot,\lambda)$ for $\lambda\in i\mathbb{R}$ then $\tfrac{1}{\overline{\mu}}$ is also a root. Since $p(\mu,\lambda)$, $\lambda\in i\mathbb{R}$, has three roots, counted with their algebraic multiplicities, at least one root must satisfy $\mu=\tfrac{1}{\overline{\mu}}$, implying that it lies on the unit circle. Consequently, $p(\mu,\lambda)$, $\lambda\in i\mathbb{R}$, has either one or three roots on the unit circle. 

To determine whether all three roots of $p(\mu,\lambda)$, $\lambda \in i\mathbb{R}$, lie on the unit circle or if two of them fall off it, we can inspect the sign of the discriminant. Specifically, the number of roots changes if and only if $p(\mu,\lambda)$, $\lambda \in i\mathbb{R}$, has a double root or, equivalently, the discriminant becomes zero. Let $\boldsymbol{f}=f_1+if_2$, and $\disc(-\mu^3+(f_1+if_2)\mu^2-(f_1-if_2)\mu+1)$
gives rise to $\Delta_3$, defined in \eqref{def:disc3}. 

Additionally, a direct calculation reveals that the deltoid curve, defined by \eqref{def:deltoid}, in the complex coordinates, or, equivalently, 
\[
(2\cos(\theta)+\cos(2\theta),2\sin(\theta)-\sin(2\theta))
\]
in polar coordinates, corresponds to $\Delta_3=0$ in the $(f_1,f_2)$ coordinates. This completes the proof.
\end{proof}

\begin{figure}[htbp]
\centering
\includegraphics[scale=0.55]{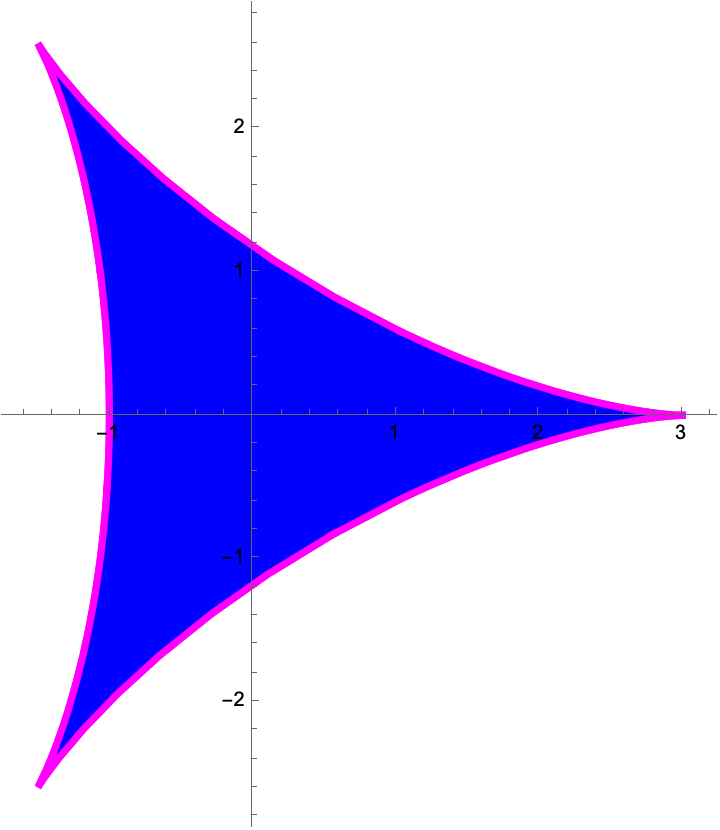}
\includegraphics[scale=0.55]{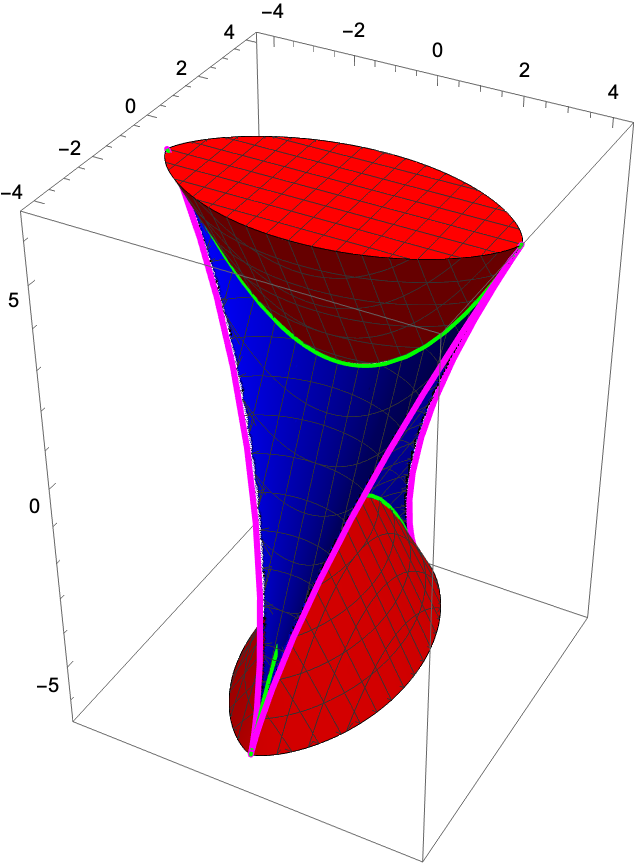}
\caption{The deltoidal region in $\mathbb{R}^2$ (left) and tetrahedral region in $\mathbb{R}^3$ (right). For third-order and fourth-order equations with generalized Hamiltonian symmetry, the monodromy matrix has three and four simple eigenvalues, respectively, along the unit circle---or, equivalently, the $L^2(\mathbb{R})$ essential spectrum of \eqref{eqn:JH} has an algebraic multiplicity three and four, respectively, on the imaginary axis---if and only if the Floquet discriminant lies in the blue region.}
\label{fig:region34}
\end{figure}
 
The left panel of Figure~\ref{fig:region34} depicts the region in $\mathbb{R}^2(=\mathbb{C}$), enclosed by the deltoid curve $\varGamma$, defined in \eqref{def:deltoid}. The monodromy matrix of \eqref{eqn:A} at $\lambda \in i\mathbb{R}$ possesses three simple eigenvalues on the unit circle, or, equivalently, $\lambda$ belongs to the $L^2(\mathbb{R})$ essential spectrum of \eqref{eqn:JH} with an algebraic multiplicity three, if and only if the Floquet discriminant lies within the deltoidal region.

\begin{remark*}
The deltoid curve $\varGamma$, defined in \eqref{def:deltoid} and also known as a Steiner curve, is a hypocycloid with three cusps, and it is the path traced by a point on the circumference of a circle with a radius $1$ as it rolls without slipping along the interior of a circle with a radius $3$. The fact that $\varGamma$ shows up in Theorem~\ref{thm:spec3} may not be as surprising as it appears. In fact, the range of the trace operator
\[
\tr: \operatorname{SU}(3) \to \mathbb{C}
\]
coincides with $\varGamma$ and its interior \cite{kaiser}. Consequently, if $\tr(\mathbf{M}(\lambda))$ lies within the interior of $\varGamma$ then $\mathbf{M}(\lambda)$ is similar to a matrix in $\operatorname{SU}(3)$. This suggests a connection among the deltoid curve, the range of the trace operator, and the monodromy matrix.
\end{remark*}
 
\begin{theorem}[Spectrum on the imaginary axis for fourth-order equations]\label{thm:spec4}
Suppose that $\mathbf{A}(x,\lambda)$ is a $4\times 4$ matrix-valued function, periodic in $x$, satisfying {\rm (A1)} and {\rm (A2)}. Suppose that $\tr(\mathbf{A}(x,\lambda))=0$ for all $x\in\mathbb{R}$ and $\lambda\in\mathbb{C}$. Let $\mathbf{M}(\lambda)$ denote the monodromy matrix of \eqref{eqn:A}, and for $\lambda \in i\mathbb{R}$, it must have zero, two, or four eigenvalues along the unit circle, counted by algebraic multiplicity. 

Recall that the Floquet discriminant is $\boldsymbol{f}(\lambda)=(f_1(\lambda),f_2(\lambda),f_3(\lambda))\in \mathbb{R}^3 (=\mathbb{C}\times \mathbb{R})$ for $\lambda\in i\mathbb{R}$, where
\begin{equation}\label{def:f123}
f_1(\lambda)+if_2(\lambda)=\tr(\mathbf{M}(\lambda))\quad\text{and}\quad
f_3(\lambda)=\tfrac12(\tr(\mathbf{M}(\lambda))^2-\tr(\mathbf{M}^2(\lambda))).
\end{equation}
Let
\begin{equation}\label{def:disc4}
\begin{aligned}
\Delta_4=&-4(f_1^6+f_2^6)-12f_1^2f_2^2(f_1^2+f_2^2)+(f_1^2+f_2^2)^2f_3^2\\
&+36(f_1^4-f_2^4)f_3-8(f_1^2-f_2^2)f_3^3\\
&-60(f_1^4+f_2^4)+312f_1^2f_2^2+16f_3^4-80(f_1^2+f_2^2)f_3^2 \\
&+288(f_1^2-f_2^2)f_3-192(f_1^2+f_2^2)-128f_3^2+256
\end{aligned}
\end{equation}
and 
\begin{align}
&P_4=8(2f_1+f_3+2)(2f_3-12)-48f_2^2, \label{def:P4}\\
&\begin{aligned}
D_4=-256(&4f_1^4+3f_2^4+f_1^2(4f_2^2+(-6+f_3)^2)\\
&+f_2^2(28+12f_3-f_3^2)+4f_1^3(2+f_3)\\
&-4(-2+f_3)(2+f_3)^2+16f_1(4+2f_2^2-f_3^2)).\end{aligned} \label{def:D4}
\end{align}
Suppose that $\Delta_4(\lambda),P_4(\lambda),D_4(\lambda)\neq0$, and $\lambda \in i\mathbb{R}$ must belong to the $L^2(\mathbb{R})$ essential spectrum of \eqref{eqn:JH} with:
\begin{itemize}
\item multiplicity $4$ if $\Delta_4(\lambda)>0$, $P_4(\lambda),D_4(\lambda)<0$, 
\item multiplicity $2$ if $\Delta_4(\lambda)<0$, and 
\item multiplicity $0$ if $\Delta_4(\lambda)>0$ while $P_4(\lambda)$ or $D_4(\lambda)>0$.
\end{itemize}
\end{theorem}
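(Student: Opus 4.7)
The plan is to mirror the proof of Theorem~\ref{thm:spec3}, now reducing the count of unit-circle roots of a degree-4 reciprocal polynomial to a count of real roots of a real quartic, and then invoking the classical discriminant-plus-auxiliary-invariant classification for real quartics. First, by Lemma~\ref{lem:symm} together with the trace-free hypothesis (which forces $e_4(\lambda)=\det\mathbf{M}(\lambda)\equiv 1$) and the identities $e_3(\lambda)=\overline{e_1(\lambda)}$ and $e_2(\lambda)\in\mathbb{R}$ on $i\mathbb{R}$, the characteristic polynomial takes the form
\[
p(\mu,\lambda)=\mu^4-(f_1+if_2)\mu^3+f_3\mu^2-(f_1-if_2)\mu+1,\qquad \lambda\in i\mathbb{R}.
\]
The reciprocal symmetry $p(\mu,\lambda)=\mu^4\,\overline{p(1/\overline{\mu},\lambda)}$ pairs roots under $\mu\mapsto 1/\overline{\mu}$ with equal multiplicity, so the number of roots on the unit circle is necessarily $0$, $2$, or $4$, establishing the first sentence of the conclusion.

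The next step is to apply the Cayley transform of Section~\ref{sec:imaginary axis}: the polynomial
\[
p^\sharp(\nu,\lambda)=(1-i\nu)^4\,p\!\left(\tfrac{1+i\nu}{1-i\nu},\lambda\right)
\]
is real in $\nu$ for $\lambda\in i\mathbb{R}$, has degree at most four, and its real roots correspond bijectively (with multiplicity) to the unit-circle roots of $p(\cdot,\lambda)$. The hypothesis $\Delta_4,P_4,D_4\neq 0$ excludes repeated roots as well as the boundary sign-configurations in what follows, so the real-root count of $p^\sharp(\cdot,\lambda)$ is unambiguously determined by the sign data below.

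Third, I would invoke the classical real-root classification for a real quartic $a\nu^4+b\nu^3+c\nu^2+d\nu+e$ with $a\neq 0$, in terms of its discriminant $\Delta$ together with the auxiliary invariants $P=8ac-3b^2$ and $D=64a^3 e-16a^2c^2+16ab^2c-16a^2bd-3b^4$. This classification asserts: four distinct real roots iff $\Delta>0$, $P<0$, $D<0$; exactly two distinct real roots and one complex-conjugate pair iff $\Delta<0$; and no real roots iff $\Delta>0$ with $P>0$ or $D>0$. Translated back through the Cayley bijection, these three cases yield precisely the three bulleted conclusions of the theorem.

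The main obstacle---and the only remaining content---is the algebraic verification that $\disc_\nu p^\sharp$, $P(p^\sharp)$, and $D(p^\sharp)$ agree with the expressions $\Delta_4$, $P_4$, $D_4$ of \eqref{def:disc4}--\eqref{def:D4} up to positive constant factors. For the discriminant, the $\mathrm{SL}(2,\mathbb{C})$ transformation law recalled in Section~\ref{sec:imaginary axis} gives $\disc_\nu p^\sharp=(-2i)^{12}\disc_\mu p=4096\,\disc_\mu p$, so one may equivalently compute $\disc_\mu p$ directly as a polynomial in $f_1,f_2,f_3$ and verify agreement with \eqref{def:disc4}. No such shortcut applies to $P_4$ and $D_4$, so one must expand $p^\sharp$ explicitly in $\nu$, read off the coefficients as polynomials in $f_1,f_2,f_3$, and substitute into the invariant formulae above; this is a lengthy but routine symbolic computation, most cleanly executed in a computer algebra system.
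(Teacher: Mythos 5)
Your proposal is correct and follows essentially the same route as the paper: the paper's own (very terse) proof derives the form \eqref{def:p4} from Lemma~\ref{lem:symm}, notes the $0/2/4$ count from the reciprocal pairing, and then appeals to the sign data of $\Delta_4$, $P_4$, $D_4$ via exactly the Cayley-transform-to-real-quartic machinery of Section~\ref{sec:imaginary axis}, omitting the algebra you describe in your final paragraph. In fact you supply more detail than the paper does (the explicit quartic invariants $P$ and $D$ and the classical real-root classification), matching what the paper carries out concretely only in the trivial-phase NLS case of Section~\ref{sec:NLS0}.
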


\begin{proof}
The proof resembles that of Theorem~\ref{thm:spec3}, observing that the characteristic polynomial of the monodromy matrix of \eqref{eqn:A} takes the form
\begin{equation}\label{def:p4}
p(\mu,\lambda)=\mu^4-(f_1(\lambda)+if_2(\lambda))\mu^3+f_3(\lambda)\mu^2-(f_1(\lambda)-if_2(\lambda))\mu+1
\end{equation}
for $\lambda \in i\mathbb{R}$, by generalized Hamiltonian symmetry. Here $f_1$, $f_2$, $f_3$ are defined in \eqref{def:f123}. We remark that $f_3$ is necessarily real although $f_1+if_2$ does not have to be. However, $f_1+if_2$ becomes real, for instance, for the linearization of the nonlinear Schr\"odinger equation about a trivial phase solution, thanks to additional symmetry. 

The discriminant of \eqref{def:p4} leads to $\Delta_4$, defined in \eqref{def:disc4}. 
When $\Delta_4\neq 0$, the eigenvalues of the monodromy matrix of \eqref{eqn:A} on the unit circle are distinct, and these eigenvalues determine the algebraic multiplicity of the $L^2(\mathbb{R})$ essential spectrum of \eqref{eqn:JH} on the imaginary axis. To count the number of eigenvalues of the monodromy matrix on the unit circle, we can inspect the signs of \eqref{def:disc4}, \eqref{def:P4}, \eqref{def:D4}, provided that $\Delta_4$, $P_4$, $D_4\neq0$. We omit the details.
\end{proof}

The right panel of Figure~\ref{fig:region34} shows the regions in $\mathbb{R}^3(=\mathbb{C}\times \mathbb{R})$ that correspond to different algebraic multiplicities of the $L^2(\mathbb{R})$ essential spectrum of \eqref{eqn:JH} along the imaginary axis. 
The multiplicity of $\lambda\in i\mathbb{R}$ is four when the Floquet discriminant lies within the tetrahedral region, depicted in blue. The multiplicity is zero---that is, $\lambda$ is not in the spectrum---when the Floquet discriminant is within the red region, and the multiplicity is two in the remaining region of $\mathbb{R}^3$. 

The tetrahedral region is bounded in $\mathbb{R}^3$ because if all eigenvalues of the monodromy matrix lie on the unit circle then
\[
|(f_1,f_2)|\leq 4\quad\text{and}\quad |f_3|\leq 6.
\]
The region has four cusps located at $(f_1,f_2,f_3)=(\pm 4,0,6)$ and $(0,\pm 4,6)$, at which the characteristic polynomial of the monodromy matrix simplifies to $(\mu\mp1)^4$ and $(\mu\mp i)^4$ respectively. The blue and the red regions are tangent to each other along the parabolic segments, in green, which can be parametrized as
\[
(f_1,f_2,f_3)=(-4\cos(\theta),0,2+4\cos^2(\theta))\quad\text{and}\quad 
(0,-4\cos(\theta),-2-4\cos^2(\theta)),
\]
where $\theta\in[-\pi,\pi]$. Along these curves, the monodromy matrix of \eqref{eqn:A} has two distinct eigenvalues each with an algebraic multiplicity two. The remaining four edges of the tetrahedral region, in magenta, can be represented by the curve parametrized as
\[
(3\cos(\theta)+\cos(3\theta), -3\sin(\theta)+\sin(3\theta), 6\cos(2\theta)).
\]
Along these curves, the monodromy matrix has one eigenvalue with a multiplicity four. 

Interestingly, when the tetrahedral region is projected onto the $f_3=0$ plane, it forms a region bounded by an astroid curve in $\mathbb{R}^2$, a hypocycloid with four cusps, which can be parametrized as
\[
(f_1,f_2)=(3\cos(\theta)+\cos(3\theta), -3\sin(\theta)+\sin(3\theta)).
\]
Recall the connection between the range of $\tr: \operatorname{SU}(3) \to \mathbb{C}$ and the deltoid curve defined in \eqref{def:deltoid}. Similarly, the range of the trace operator $\tr: \operatorname{SU}(4)\to \mathbb{C}$ coincides with the astroid curve and its interior.

\begin{remark*}\rm
The deltoidal and tetrahedral regions in $\mathbb{R}^2$ and $\mathbb{R}^3$ can be seen as analogous to taking the interval $(-2,2)\subset \mathbb{R}$ for second-order self-adjoint operators and extending it to third-order and fourth-order equations with generalized Hamiltonian symmetry. Going further, it is possible to explicitly calculate a region in $\mathbb{R}^{n-1}$ for $n$-th order equations, such that the monodromy matrix of \eqref{eqn:A} at $\lambda \in i\mathbb{R}$ has $n$ simple eigenvalues on the unit circle or, equivalently, $\lambda$ is in the $L^2(\mathbb{R})$ essential spectrum of \eqref{eqn:JH} with an algebraic multiplicity $n$ if and only if the Floquet discriminant lies within the region. In Section~\ref{sec:5} we present an example for $n=5$. However, it is important to note that as the order of the equation increases, the Floquet discriminant becomes more challenging to compute analytically as well as numerically.

The region where the purely imaginary spectrum of an $n$-th order equation has an algebraic multiplicity $n$ is bounded in $\mathbb{R}^{n-1}$ with $n$ cusps, at which the characteristic polynomial of the monodromy matrix becomes $(\mu-e^{2\pi ik/n})^n$, $k=0,1,\dots, n-1$. On the other hand, the regions corresponding to multiplicities $<n$ are unbounded in $\mathbb{R}^{n-1}$.
\end{remark*}

For second-order self-adjoint operators, such as \eqref{eqn:A2}, the derivative of the Floquet discriminant determines whether the $L^2(\mathbb{R})$ essential spectrum bifurcates away from the imaginary axis. For third-order equations that exhibit generalized Hamiltonian symmetry, the Floquet discriminant and its derivative can likewise be used for a necessary condition for such bifurcations. Furthermore, under some additional assumptions, the necessary condition can also serve as a sufficient condition.

\begin{theorem}[Spectrum away from the imaginary axis for third-order equations]\label{thm:bifur3}
Suppose that $\mathbf{A}(x,\lambda)$ is a $3\times 3$ matrix-valued function, periodic in $x$, satisfying {\rm (A1)} and {\rm (A2)}. Suppose that $\tr(\mathbf{A}(x,\lambda))=0$ for all $x\in\mathbb{R}$ and $\lambda\in\mathbb{C}$. Let $\boldsymbol{f}(\lambda)$, $\lambda \in i\mathbb{R}$, denote the Floquet discriminant. If the $L^2(\mathbb{R})$ essential spectrum of \eqref{eqn:JH} bifurcates at $\lambda\in i\mathbb{R}$ away from the imaginary axis in a transversal manner, alongside the spectrum on the axis, then  
\begin{equation}\label{def:res3}
\varPhi_3(\lambda):=\boldsymbol{f}'(\lambda)^3 +\boldsymbol{f}'(-\lambda)^3+\boldsymbol{f}(\lambda)\boldsymbol{f}'(-\lambda)^2\boldsymbol{f}'(\lambda)+\boldsymbol{f}(-\lambda) \boldsymbol{f}'(\lambda)^2\boldsymbol{f}'(-\lambda)=0.
\end{equation}
Here and elsewhere, the prime means ordinary differentiation. Conversely, if $\varPhi_3(\lambda)=0$ and if 
\begin{equation}\label{cond:suff3}
\Delta_3(\lambda)\neq 0\quad\text{and}\quad 
\boldsymbol{f}'(-\lambda)\boldsymbol{f}''(\lambda) \neq -\boldsymbol{f}'(\lambda)\boldsymbol{f}''(-\lambda), 
\end{equation}
where $\Delta_3$ is in \eqref{def:disc3}, then the spectrum of \eqref{eqn:JH} bifurcates at $\lambda$ away from the imaginary axis. Moreover, the spectrum exhibits the normal form 
\[
\Delta \lambda = \alpha \sqrt{\Delta\mu}+o(\sqrt{\Delta \mu}) 
\quad \text{for $|\Delta \lambda|, |\Delta\mu|\ll1$}.
\]
\end{theorem}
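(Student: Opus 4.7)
My plan is to split the claim into the necessary direction (via a resultant calculation), the sufficient direction (via an implicit function theorem / Puiseux argument), and a brief verification that the resulting branch genuinely leaves the imaginary axis.

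For the necessary direction, transversal bifurcation at $(\mu_0,\lambda_0)$ with $|\mu_0|=1$ and $\lambda_0\in i\mathbb{R}$ requires both $p(\mu_0,\lambda_0)=0$ and $p_\lambda(\mu_0,\lambda_0)=0$, as already noted in Section~\ref{sec:bifurcation}; hence $\res_\mu(p,p_\lambda)$ must vanish at $\lambda_0$. The key simplification is the factorization
\[
p_\lambda(\mu,\lambda)=\mu\bigl(\boldsymbol{f}'(\lambda)\mu+\boldsymbol{f}'(-\lambda)\bigr),
\]
together with $p(0,\lambda)=1\neq 0$. Using the identity $\res_\mu(p,q)=\text{lead}(q)^{\deg p}\prod_{q(\beta)=0}p(\beta)$ with $q=p_\lambda$ (the sign $(-1)^{\deg p\cdot\deg q}$ is trivial here), one obtains
\[
\res_\mu(p,p_\lambda)=\boldsymbol{f}'(\lambda)^3\,p\!\left(-\frac{\boldsymbol{f}'(-\lambda)}{\boldsymbol{f}'(\lambda)},\lambda\right),
\]
and clearing denominators in the right-hand side reproduces exactly the polynomial $\varPhi_3(\lambda)$ of \eqref{def:res3}.

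For the sufficient direction, assume $\varPhi_3(\lambda_0)=0$ and set $\mu_0=-\boldsymbol{f}'(-\lambda_0)/\boldsymbol{f}'(\lambda_0)$. Since (A1) forces $\boldsymbol{f}$ to have real Taylor coefficients, the identity $\boldsymbol{f}'(-\lambda_0)=\overline{\boldsymbol{f}'(\lambda_0)}$ on $i\mathbb{R}$ gives $|\mu_0|=1$ automatically. I would then Taylor expand
\[
p(\mu_0+\Delta\mu,\lambda_0+\Delta\lambda)=p_\mu\,\Delta\mu+\tfrac12 p_{\mu\mu}(\Delta\mu)^2+p_{\mu\lambda}\,\Delta\mu\Delta\lambda+\tfrac12 p_{\lambda\lambda}(\Delta\lambda)^2+\text{h.o.t.},
\]
noting that the $\Delta\lambda$ coefficient vanishes by construction. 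The hypothesis $\Delta_3(\lambda_0)\neq 0$ rules out $\mu_0$ being a double root of $p(\cdot,\lambda_0)$ and therefore forces $p_\mu(\mu_0,\lambda_0)\neq 0$. A direct differentiation combined with $\boldsymbol{f}'(\lambda_0)\mu_0=-\boldsymbol{f}'(-\lambda_0)$ yields
\[
p_{\lambda\lambda}(\mu_0,\lambda_0)=-\frac{\mu_0}{\boldsymbol{f}'(\lambda_0)}\bigl(\boldsymbol{f}''(\lambda_0)\boldsymbol{f}'(-\lambda_0)+\boldsymbol{f}''(-\lambda_0)\boldsymbol{f}'(\lambda_0)\bigr),
\]
which is nonzero precisely under the second hypothesis of \eqref{cond:suff3}. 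Applying the implicit function theorem with $\Delta\mu$ as the dependent variable gives $\Delta\mu=-\tfrac{p_{\lambda\lambda}}{2p_\mu}(\Delta\lambda)^2+O((\Delta\lambda)^3)$, and inversion produces the normal form $\Delta\lambda=\alpha\sqrt{\Delta\mu}+o(\sqrt{\Delta\mu})$ with $\alpha^2=-2p_\mu/p_{\lambda\lambda}$.

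The main obstacle, and the real content of the converse, is to argue that this Puiseux branch actually carries the spectrum off $i\mathbb{R}$ rather than staying tangent to it. Parametrizing the unit circle by $\mu=\mu_0 e^{it}$, I get $\Delta\mu=i\mu_0 t+O(t^2)$, so the four local branches of $\{p=0\}\cap\{|\mu|=1\}$ emanate from $\lambda_0$ at angles $\arg(\alpha\sqrt{i\mu_0})+k\pi/2$ for $k=0,1,2,3$, forming an orthogonal cross. This cross must be invariant under the spectral symmetry $\lambda\mapsto -\overline{\lambda}$, which fixes $\lambda_0\in i\mathbb{R}$ pointwise and reflects tangent directions across the imaginary axis. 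A short case analysis shows that the arms must then align either with the real and imaginary axes or with the two diagonals, and in either case at least one arm is transversal to $i\mathbb{R}$. This realizes the advertised bifurcation and completes the proof.
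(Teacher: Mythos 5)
Your proposal is correct and follows essentially the same route as the paper: the necessity half is the identical resultant computation exploiting $p_\lambda(\mu,\lambda)=\mu(\boldsymbol{f}'(\lambda)\mu+\boldsymbol{f}'(-\lambda))$ together with $p(0,\lambda)=1$, and the sufficiency half is the same quadratic expansion of $p$ at $(\mu_0,\lambda_0)$ with $p_\mu\neq0$ and $p_{\lambda\lambda}\neq0$ read off from \eqref{cond:suff3}, yielding the square-root normal form. The only (harmless) divergence is the closing step: where you invoke invariance of the orthogonal cross under $\lambda\mapsto-\overline{\lambda}$ to conclude that at least one arm leaves the imaginary axis, the paper instead observes that the eigenvalue branch through $\mu_0$ must remain on the unit circle for nearby $\lambda\in i\mathbb{R}$, which pins one pair of arms to the imaginary axis and forces the other pair to run parallel to the real axis; either argument finishes the proof.
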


Note that $\varPhi_3(\lambda)$ is real for $\lambda \in i\mathbb{R}$ by \eqref{eqn:e_k(iR)}. 
 
\begin{proof}
Recall from the proof of Theorem~\ref{thm:spec3} that the characteristic polynomial of the monodromy matrix of \eqref{eqn:A} is 
\[
p(\mu,\lambda)=-\mu^3+\boldsymbol{f}(\lambda)\mu^2-\boldsymbol{f}(-\lambda)\mu+1.
\] 
Suppose that $p(\mu_0,\lambda_0)=0$ for some $\mu_0\in\mathbb{C}$, $|\mu_0|=1$, for some $\lambda_0 \in i\mathbb{R}$. It follows from the implicit function theorem that we can solve $p(\mu,\lambda)=0$ uniquely for $\lambda$ as a function of $\mu$ in a neighborhood of $\mu_0$ and $\lambda_0$, provided that
\[
\boldsymbol{f}'(\lambda_0)\mu_0^2+\boldsymbol{f}'(-\lambda_0)\mu_0=p_\lambda(\mu_0,\lambda_0) \neq 0,
\]
whence $\boldsymbol{f}'(\lambda_0)\mu_0 +\boldsymbol{f}'(-\lambda_0) \neq 0$ because $p(0,\lambda)=1$ for all $\lambda \in \mathbb{C}$. Consequently, if the implicit function theorem fails for $p(\mu,\lambda)=0$ near $\mu_0$ and $\lambda_0$ then
\begin{equation}\label{eqn:mu0}
\boldsymbol{f}'(\lambda_0)\mu_0+\boldsymbol{f}'(-\lambda_0)=0,
\end{equation}
implying
\[
\mu_0=-\frac{\boldsymbol{f}'(-\lambda_0)}{\boldsymbol{f}'(\lambda_0)}\quad \text{or}\quad \boldsymbol{f}'(\lambda_0)=0. 
\]  
Note that $-\frac{\boldsymbol{f}'(-\lambda_0)}{\boldsymbol{f}'(\lambda_0)}$ for $\lambda_0\in i\mathbb{R}$ necessarily lies on the unit circle although it does not have to be the case for $\lambda_0 \notin i\mathbb{R}$. Moreover, if $\boldsymbol{f}'(\lambda_0)=0$ for $\lambda_0 \in i\mathbb{R}$ then $\boldsymbol{f}'(-\lambda_0)=0$. Therefore, if the implicit function theorem fails for $p(\mu,\lambda)=0$ near $\mu_0$ and $\lambda_0$ then 
\begin{multline*}
p\left(-\frac{\boldsymbol{f}'(-\lambda_0)}{\boldsymbol{f}'(\lambda_0)},\lambda_0\right) \\ 
=\frac{1}{\boldsymbol{f}'(\lambda_0)^3}(\boldsymbol{f}'(-\lambda_0)^3+\boldsymbol{f}(\lambda_0)\boldsymbol{f}'(-\lambda_0)^2 \boldsymbol{f}'(\lambda_0)+\boldsymbol{f}(-\lambda_0)\boldsymbol{f}'(-\lambda_0)\boldsymbol{f}'(\lambda_0)^2+\boldsymbol{f}'(\lambda_0)^3)=0,
\end{multline*}
which gives rise to $\varPhi_3$, defined in \eqref{def:res3}. Alternatively, we can calculate 
\[
\res_\mu(p(\mu,\lambda), p_\lambda(\mu,\lambda))=\boldsymbol{f}'(\lambda)^3 +\boldsymbol{f}'(-\lambda)^3+\boldsymbol{f}(\lambda)\boldsymbol{f}'(-\lambda)^2\boldsymbol{f}'(\lambda)+\boldsymbol{f}(-\lambda) \boldsymbol{f}'(\lambda)^2\boldsymbol{f}'(-\lambda).
\]
Recall that the resultant of two polynomials is zero if and only if they share a common root. Therefore, if the implicit function theorem fails for $p(\mu,\lambda)=0$ near $\mu_0$ and $\lambda_0$ then  $\res_\mu(p(\mu_0,\lambda_0), p_\lambda(\mu_0,\lambda_0))=0$.

Conversely, suppose that $\varPhi_3(\lambda_0)=0$ for some $\lambda_0 \in i\mathbb{R}$, implying that
\[
p(\mu_0,\lambda_0)=p_\lambda(\mu_0,\lambda_0)=0 \quad \text{for some $\mu_0\in\mathbb{C}$, $|\mu_0|=1$, satisfying \eqref{eqn:mu0}}.
\]
Suppose that \eqref{cond:suff3} holds true. The former holds true if and only if $p(\mu,\lambda_0)=0$ has three distinct roots, whence $p_\mu(\mu_0,\lambda_0)\neq 0$. The latter, on the other hand, can be written as
\[
\mu_0\boldsymbol{f}''(\lambda_0)\neq \boldsymbol{f}''(-\lambda_0),
\]
by \eqref{eqn:mu0}, whence $p_{\lambda\lambda}(\mu_0,\lambda_0)\neq0$. We wish to solve 
\begin{align*}
0&=p(\mu,\lambda)\\&=p(\mu_0,\lambda_0)+p_\lambda(\mu_0,\lambda_0)\Delta \lambda+p_\mu(\mu_0,\lambda_0)\Delta \mu+\tfrac12 p_{\lambda\lambda}(\mu_0,\lambda_0)(\Delta \lambda)^2+o((\Delta\lambda)^2, \Delta\mu) \\
&=\tfrac12 p_{\lambda\lambda}(\mu_0,\lambda_0)(\Delta \lambda)^2+p_\mu(\mu_0,\lambda_0)\Delta \mu+o((\Delta\lambda)^2, \Delta\mu),
\end{align*}
where we assume that $|\Delta \mu|$ and $|\Delta \lambda|$ are sufficiently small. It follows from the Weierstrass preparation theorem that 
\[
\Delta\lambda=\pm i\sqrt{\frac{2p_\mu(\mu_0,\lambda_0)}{p_{\lambda\lambda}(\mu_0,\lambda_0)}\Delta \mu}+o(\sqrt{|\Delta \mu|})  
\]
for $|\Delta \mu|\ll1$. Since $\mu$ must lie on the unit circle, we can take $\Delta\mu=i\mu_0r$, where $r\in\mathbb{R}$. Since $p_{\lambda\lambda}(\mu_0,\lambda_0)\neq0$ and since there must be a spectral curve along the imaginary axis in the vicinity of $\mu_0$ and $\lambda_0$, $\frac{2p_\mu(\mu_0,\lambda_0)}{p_{\lambda\lambda}(\mu_0,\lambda_0)}i\mu_0 r$ must be real. As $r$ varies over positive real values, $\Delta\lambda$ varies over imaginary values, and as $r$ varies over negative real values, $\Delta\lambda$ varis over real values, or vice versa, depending on the sign of $\frac{p_\mu(\mu_0,\lambda_0)}{p_{\lambda\lambda}(\mu_0,\lambda_0)}i\mu_0$. In either case, two spectral curves emerge near $\mu_0$ and $\lambda_0$: one along the imaginary axis and another parallel to the real axis. This completes the proof. 
\end{proof}

\begin{remark*}
Let $\mu = e^{i\theta}$, where $\theta\in\mathbb{R}$. Treating $\theta$ as a function of $\lambda \in i\mathbb{R}$, we calculate
\[
\frac{d\theta}{d\lambda}=-\frac{1}{i\mu}\frac{p_\lambda(\mu,\lambda)}{p_\mu(\mu,\lambda)}=-\frac{\boldsymbol{f}'(\lambda)e^{2i\theta}+\boldsymbol{f}'(-\lambda)e^{i\theta}}{ie^{i\theta}(-3e^{2i\theta}+2\boldsymbol{f}(\lambda)e^{i \theta}-\boldsymbol{f}(-\lambda))}.
\]
We observe that $p_\lambda(\mu,\lambda)=\boldsymbol{f}'(\lambda)\mu^2+\boldsymbol{f}'(-\lambda)\mu$ vanishes if and only if $\varPhi_3(\lambda)=0$. Similarly, $i\mu p_\mu(\mu,\lambda)=-3i\mu^3+2i\boldsymbol{f}(\lambda)\mu^2-i\boldsymbol{f}(-\lambda)\mu$ vanishes if and only if $\Delta_3(\lambda)=0$. Consequently, the bifurcation of the $L^2(\mathbb{R})$ essential spectrum of \eqref{eqn:JH} away from the imaginary axis becomes possible when one of the eigenvalues of the monodromy matrix reverses its direction and starts moving the unit circle in the opposite direction as defined by the Krein signature \cite{Kapitula,KollarMiller}.
\end{remark*}

For higher-order equations with generalized Hamiltonian symmetry, a necessary condition for the $L^2(\mathbb{R})$ essential spectrum of \eqref{eqn:JH} to bifurcate away from the imaginary axis is \eqref{eqn:res}. 

\begin{theorem}[Spectrum away from the imaginary axis for higher-order equations]\label{thm:bifur+}
Suppose that $\mathbf{A}(x,\lambda)$ is a $n\times n$ matrix-valued function, periodic in $x$, satisfying {\rm (A1)} and {\rm (A2)}. Let $p(\mu,\lambda)$ denote the characteristic polynomial of the monodromy matrix of \eqref{eqn:A}. 
If the $L^2(\mathbb{R})$ essential spectrum of \eqref{eqn:JH} bifurcates at $\lambda\in i\mathbb{R}$ away from the imaginary axis in a transversal manner, alongside the spectrum on the axis, then    
\begin{equation}\label{def:res+}
\varPhi_n(\lambda):=\res_\mu(p(\mu,\lambda),p_\lambda(\mu,\lambda))=0.
\end{equation}
\end{theorem}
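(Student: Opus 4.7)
The plan is to generalize the necessary direction of Theorem~\ref{thm:bifur3} essentially verbatim, since the mechanism---failure of the implicit function theorem combined with the resultant criterion for a common root---is independent of the size $n$. First I would fix $\lambda_0 \in i\mathbb{R}$ in the $L^2(\mathbb{R})$ essential spectrum of \eqref{eqn:JH}. By Floquet theory, there is then some $\mu_0\in\mathbb{C}$ with $|\mu_0|=1$ such that $p(\mu_0,\lambda_0)=0$, i.e., $\mu_0$ is a Floquet multiplier of \eqref{eqn:A} on the unit circle.

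Next I would apply the holomorphic implicit function theorem to the scalar equation $p(\mu,\lambda)=0$ near $(\mu_0,\lambda_0)$. Provided $p_\lambda(\mu_0,\lambda_0)\neq 0$, the zero set $\{p=0\}$ is, in a neighborhood of $(\mu_0,\lambda_0)$, a single smooth analytic curve that can be written as the graph $\lambda=\lambda(\mu)$ of a holomorphic function. Intersecting with the unit circle yields exactly one spectral branch through $\lambda_0$, and its tangent direction is determined by $-p_\mu/p_\lambda$ evaluated at $(\mu_0,\lambda_0)$. A transversal bifurcation at $\lambda_0$ away from $i\mathbb{R}$, in addition to the imaginary-axis branch, would require at least two distinct spectral arcs through $(\mu_0,\lambda_0)$---the one along $i\mathbb{R}$ and one transverse to it---which is incompatible with the single-graph picture. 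Hence necessarily $p_\lambda(\mu_0,\lambda_0)=0$.

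Finally, I would translate the simultaneous vanishing $p(\mu_0,\lambda_0)=p_\lambda(\mu_0,\lambda_0)=0$ into a condition involving $\lambda_0$ alone. Since $p(\cdot,\lambda_0)$ and $p_\lambda(\cdot,\lambda_0)$ are both polynomials in $\mu$, the classical characterization of the resultant states that they share a common root in $\mu$ if and only if $\res_\mu(p(\mu,\lambda_0),p_\lambda(\mu,\lambda_0))=0$, which is exactly $\varPhi_n(\lambda_0)=0$ by \eqref{def:res+}.

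The only real subtlety is making precise what \emph{transversal bifurcation} means; the interpretation I would adopt---mirroring the $n=3$ analysis in Theorem~\ref{thm:bifur3}---is that the real-analytic variety $\{p(\mu,\lambda)=0,\ |\mu|=1\}$ near $(\mu_0,\lambda_0)$ decomposes into at least two distinct arcs, one remaining on $i\mathbb{R}$ and at least one leaving it with nonzero real part, so that at $(\mu_0,\lambda_0)$ the variety is not a smooth submanifold. Beyond pinning this down, no calculation is needed: the implicit function theorem and the defining property of the resultant immediately yield the claim. Note that, in contrast to Theorem~\ref{thm:bifur3}, no sufficiency is asserted here, so I would not need to perform a Weierstrass preparation step or verify non-degeneracy of $p_{\lambda\lambda}$.
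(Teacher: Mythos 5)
Your proposal is correct and follows essentially the same route as the paper: the authors likewise argue (in Section~\ref{sec:bifurcation} and in the proof of Theorem~\ref{thm:bifur3}) that a transversal bifurcation forces the implicit function theorem to fail for $p(\mu,\lambda)=0$ at the unit-circle multiplier, so $p_\lambda(\mu_0,\lambda_0)=0$, and then invoke the standard fact that $p(\cdot,\lambda_0)$ and $p_\lambda(\cdot,\lambda_0)$ share a root exactly when their resultant vanishes. Your added care about what ``transversal bifurcation'' means is consistent with the paper's (implicit) reading, and correctly noting that no sufficiency or Weierstrass preparation step is needed here matches the paper's treatment.
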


\begin{remark*}
Under some assumptions, \eqref{def:res+} implies that the mapping $\mu \mapsto \lambda(\mu)$ is locally not invertible. However, unlike the case with third-order equations, this is no longer guaranteed for an eigenvalue of the monodromy matrix on the unit circle. We present numerical examples that illustrate the scenario in which the bifurcation index becomes zero, yet no bifurcation of the spectrum takes place away from the imaginary axis.
\end{remark*}

If the eigenfunctions of \eqref{eqn:JH} exhibit an asymptotic behavior consistent with their WKB approximations at leading order as $\lambda$ varies along the imaginary axis towards $\pm i\infty$, and if these approximations coincide with the eigenfunctions of the limiting eigenvalue problem, with constant coefficients, then it becomes possible to determine the algebraic multiplicity of the eigenvalues $\lambda \in i\mathbb{R}$ for $|\lambda|\gg1$. Additioanlly, Theorem~\ref{thm:bifur+} can be applied to establish that, for some classes of quasi-periodic eigenvalue problems for Hamiltonian PDEs, the $L^2(\mathbb{R})$ essential spectrum remains along the imaginary axis outside a bounded region of the complex plane. This proves advantageous for numerical computations. It is worth noting that the first two authors and their collaborator \cite{BHS2023} present an alternative method for bounding unstable spectra $\notin i\mathbb{R}$, by utilizing an argument based on the Gershgorin circle theorem. 

\begin{theorem}[Spectrum away from the imaginary axis towards $\pm i\infty$]\label{thm:asym}
Consider
\begin{equation}\label{eqn:a}
\lambda v=(\partial_x^{n}+a_1(x)\partial_x^{n-1}+\cdots+a_n(x))v,\qquad \lambda\in\mathbb{C},
\end{equation}
where $a_k(x)$, $k=1,2,\dots, n$, are real-valued and smooth functions, satisfying $a_k(x+T)=a_k(x)$ for some $T>0$, the period. For $n$ odd, there exist only a finite number of zeros of $\varPhi_n(\lambda)$ for $\lambda\in i\mathbb{R}$, defined in \eqref{def:res+}, where $p(\mu,\lambda)$ denotes the characteristic polynomial of the monodromy matrix associated with \eqref{eqn:a}. Consequently, there are only a finite number of points along the imaginary axis where the $L^2(\mathbb{R})$ essential spectrum of \eqref{eqn:a} bifurcates away from the axis in a transversal manner.
\end{theorem}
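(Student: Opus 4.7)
The plan is to combine the fact that $\varPhi_n$ is entire in $\lambda$ with a WKB asymptotic analysis of the monodromy matrix as $|\lambda|\to\infty$ along $i\mathbb{R}$, so as to localise all possible zeros of $\varPhi_n$ to a compact subset of $i\mathbb{R}$ where they must be finite in number.

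First I would recast \eqref{eqn:a} as a first-order system \eqref{eqn:A} via a companion matrix whose only $\lambda$-dependence is linear and appears in a single entry. The fundamental matrix $\mathbf{V}(x,\lambda)$ and the monodromy $\mathbf{M}(\lambda)=\mathbf{V}(T,\lambda)$ are then entire matrix-valued functions of $\lambda$, so the coefficients of $p(\mu,\lambda)$ and of $p_\lambda(\mu,\lambda)$ are entire in $\lambda$, and hence so is $\varPhi_n(\lambda)=\res_\mu(p,p_\lambda)$. Zeros of an entire function that is not identically zero are isolated, so it would suffice to prove that $\varPhi_n(\lambda)\neq 0$ for $|\lambda|$ sufficiently large along $i\mathbb{R}$; this would both supply non-vanishing and confine the zero set to a compact subset of $i\mathbb{R}$.

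For the asymptotic step I would apply WKB. The principal symbol of $\mathbf{A}(x,\lambda)$ is the companion matrix of $\xi^n=\lambda$, whose eigenvalues are $\xi_k(\lambda)=\lambda^{1/n}\omega_k$ with $\omega_k=e^{2\pi ik/n}$ and $k=0,1,\dots,n-1$. For $n$ odd and $\lambda=i\tau$ with $\tau\in\mathbb{R}\setminus\{0\}$, exactly one $\xi_k$ is purely imaginary, while the remaining $n-1$ occur in complex-conjugate pairs with nonzero real parts; all $n$ of them are distinct, with spectral gaps of order $|\lambda|^{1/n}$. A standard diagonalisation-plus-correction construction then produces Floquet exponents of the form $\rho_k(\lambda)=\xi_k(\lambda)+O(|\lambda|^{-1/n})$ and distinct monodromy eigenvalues $\mu_k(\lambda)=e^{T\rho_k(\lambda)}$ for $|\lambda|$ large, whence $p_\mu(\mu_k,\lambda)\neq 0$ at each root. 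Because $\rho_k'(\lambda)=(\omega_k/n)\lambda^{(1-n)/n}+O(|\lambda|^{-1})$ is nonzero for $|\lambda|$ large, so is $\mu_k'(\lambda)=T\mu_k(\lambda)\rho_k'(\lambda)$, and the identity $p_\lambda(\mu_k,\lambda)=-p_\mu(\mu_k,\lambda)\mu_k'(\lambda)$ then forces $p_\lambda(\mu_k,\lambda)\neq 0$ for every $k$. Thus $p$ and $p_\lambda$ share no common root and $\varPhi_n(\lambda)\neq 0$. Combined with the opening paragraph, this would leave $\varPhi_n$ with only finitely many zeros on $i\mathbb{R}$, and Theorem~\ref{thm:bifur+} would translate this into a finite count of bifurcation points.

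The hard part will be the WKB step: one must verify that the variable coefficients $a_1(x),\dots,a_n(x)$ produce only subleading corrections to the Floquet exponents and in particular do not force any $\rho_k$ to coalesce at infinity. Because the $\xi_k$ are simple eigenvalues of the principal symbol with gaps of order $|\lambda|^{1/n}$, the perturbative construction of the WKB phases---diagonalising in the resolvent of the principal symbol and iterating---should go through cleanly, and this is precisely where the smoothness and periodicity of the $a_k$ enter. The assumption that $n$ is odd is also essential here: it is this parity that ensures exactly one $\xi_k$ sits on the imaginary axis (producing the eigenvalue of $\mathbf{M}(\lambda)$ on the unit circle that supports the essential spectrum out to $\pm i\infty$), while the remaining $\xi_k$ separate into conjugate pairs consistent with the generalized Hamiltonian symmetry.
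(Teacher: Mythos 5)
Your proposal follows essentially the same route as the paper: a WKB approximation reduces the monodromy at large $|\lambda|$ to that of the constant-coefficient problem $\lambda v=\partial_x^n v$, whose Floquet multipliers $e^{T\lambda^{1/n}\omega_k}$ are shown to remain distinct, so $\varPhi_n$ is nonvanishing for $|\lambda|$ large on $i\mathbb{R}$ and analyticity confines its zeros to a finite set; your identity $p_\lambda(\mu_k,\lambda)=-p_\mu(\mu_k,\lambda)\mu_k'(\lambda)$ is just the factored form of the paper's product formula $\res_\mu(p,p_\lambda)\sim\pm\disc_\mu p\cdot\theta_1'\cdots\theta_n'$. One small correction: for $\lambda=i\tau$ the non-imaginary roots $\lambda^{1/n}\omega_k$ do not form complex-conjugate pairs (for $n=3$ they are reflections through the imaginary axis), and distinctness of the exponents $\rho_k$ does not by itself give distinctness of $\mu_k=e^{T\rho_k}$; what closes this step is that every difference $\lambda^{1/n}(\omega_j-\omega_k)$ has real part of order $|\lambda|^{1/n}$ (since $e^{i\pi(1+2(j+k))/(2n)}$ is never real), so $|\mu_j/\mu_k|\neq 1$ for $|\lambda|$ large.
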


\begin{proof}
Utilizing a WKB approximation to \eqref{eqn:a} (see, for instance, \cite[Chapter~7]{Was18} for details), we establish that the fundamental solutions of \eqref{eqn:a} satisfy 
\[
v_k(x,\lambda)\sim e^{\lambda^{1/n}\omega_k x}, \qquad k=1,2,\dots,n,
\]
for $\lambda\in i\mathbb{R}$ as $|\lambda|\to\infty$, where $\omega_k^n = 1$, that is, $\omega_k$ represent the $n$-th roots of unity. Introducing $\theta_k(\lambda)=e^{\lambda^{1/n} \omega_k T}$, $k=1,2,\dots,n$, we can approximate the characteristic polynomial of the monodromy matrix associated with \eqref{eqn:a} as
\[
p(\mu,\lambda) \sim \prod_{k=1}^n (\theta_k(\lambda)-\mu) =  \sum_{k=0}^n(-1)^{n-k}e_k(\theta_1,\theta_2,\dots, \theta_n)(\lambda) \mu^{n-k} 
\]
for $\lambda\in i\mathbb{R}$ as $|\lambda|\to\infty$, where $e_k(\theta_1,\theta_2,\dots, \theta_n)(\lambda)$ denotes the $k$-th elementary symmetric polynomial in $\theta_1,\theta_2,\dots, \theta_n$. Recall that the resultant of two polynomials, $p_1$ and $p_2$, can be determined as the product of $p_2$ evaluated at the roots of $p_1$ \cite{GKZ}, and we can calculate 
\begin{align}
\res_\mu(p(\mu,\lambda),p_\lambda(\mu,\lambda)) & \sim (-1)^{n(n-1)/2}\disc_\mu p(\mu,\lambda)(\theta_1\theta_2\cdots\theta_n\theta_1'\theta_2'\cdots\theta_n')(\lambda) \notag \\ 
&=(-1)^{n(n-1)/2+n+1}n^{-n}T^n\disc_\mu p(\mu,\lambda)\lambda^{1-n}\label{eqn:res(infty)}
\end{align}
for $\lambda\in i\mathbb{R}$ as $|\lambda|\to\infty$, where the prime means ordinary differentiation. Here the equality uses $\sum \omega_k=0$ and $\prod \omega_k=(-1)^{n+1}$. 

We can explicitly calculate the resultant for the limiting eigenvalue problem of \eqref{eqn:a} for $\lambda\in i\mathbb{R}$ as $|\lambda|\to\infty$ and, hence, for \eqref{eqn:a} itself at the leading order in $\lambda$. Referring to \eqref{def:res+}, we can determine the bifurcation index for the $L^2(\mathbb{R})$ essential spectrum of \eqref{eqn:a} away from the imaginary axis for $\lambda\in i\mathbb{R}$ as $|\lambda|\to\infty$. 
Since the characteristic polynomial for the limiting problem $\lambda v=\partial_x^nv$, for $n$ odd, will have $n$ distinct roots, the discriminant does not vanish, and neither does the resultant. Therefore, any bifurcations of the spectrum of \eqref{eqn:a} away from the imaginary axis will ultimately terminate along the imaginary axis towards $\pm i\infty$. This completes the proof.
\end{proof}

\begin{remark*}
Let $\lambda=i\nu^n$, where $\nu\gg1$, and we can calculate 
\begin{align} \label{eqn:res(infty)'}
\varPhi_n(\lambda)&\sim \prod_{1\leq j<k \leq n}(e^{\lambda^{1/n}\omega_j T}-e^{\lambda^{1/n}\w_k T})^2 \notag \\ 
&=\prod_{1\leq j<k \leq n}(2i)^{n(n-1)}\sin^2\Big(\nu Te^{\frac{1+2(j+k)\pi i}{2n}}\sin(\tfrac{j-k}{n}\pi)\Big),
\end{align}
which is real for $n$ odd. Indeed, we observe that all arguments of the sine functions either result in purely imaginary values or occur in pairs. When these pairs are multiplied together, they yield real values because they consist of complex numbers symmetric with respect to reflections across either the real or imaginary axis. Utilizing the identity
\[
\sin(a+ib)\sin(a-ib)=\tfrac12(\cosh(2b)-\cos(2a)),
\]
we can demonstrate that \eqref{eqn:res(infty)'} is real for $n$ odd. This also follows from the symmetry of the characteristic polynomial as well as Theorem~\ref{thm:asym}.
\end{remark*}

For example, when considering the generalized KdV equation (for $n=3$) and the Kawahara equation (for $n=5$), Theorem~\ref{thm:asym} implies that there are only a finite number of points along the imaginary axis where the $L^2(\mathbb{R})$ essential spectrum for the stability problem bifurcates away from the axis. In the case of even $n$, it is possible to derive a leading-order approximation of such bifurcations at $\lambda \in i\mathbb{R}$ and $|\lambda|\gg1$, provided that the limiting eigenvalue problem itself is Hamiltonian. Additionally, \eqref{eqn:res(infty)} and \eqref{eqn:res(infty)'} offer a method to calculate the bifurcation index up to leading order in $\lambda$ as $\lambda \to \pm i\infty$ for some lower-order eigenvalue problems. This can also be used to calculate the discriminant of the characteristic polynomial. However, it is important to note that these formulae can become quite intricate, and our approach has been to evaluate each equation on a case-by-case basis. Sections \ref{sec:asym3}, \ref{sec:asym4} and \ref{sec:asym5} provide more details.

\subsection{Numerical method}\label{sec:numerics}

In what follows we present a number of numerical experiments to illustrate these results. The numerics presented are done using two different techniques. The spectra of the linear operators are computed using the Fourier-Floquet-Hill method\cite{DeconinckKutz_2006}, a type of spectral method. The potentials are expanded in a Fourier series, and the operators expressed as operators on the sequence space $\ell_2$. These operators are then truncated, resulting in an $N\times N$ matrix eigenvalue problem, which we then solve across the entire range of the Floquet exponent, to obtain the complete spectrum. Typically $N=31$, encompassing wave numbers spanning from $-15$ to $15$. 

For the majority of examples, the periodic potential function can be conveniently represented using elementary functions or Jacobi elliptic functions, and their Fourier coefficients can be calculated using well-established analytic formulae (see, for instance, \cite{Kiper} for more details). However, in one instance involving the generalized KdV equation, a traveling wave solution cannot be expressed in terms of elliptic functions, and the Fourier coefficients are computed numerically. This involves solving the ODE governing periodic traveling waves, followed by numerical integration. 

To numerically compute the Floquet discriminant, which enables us to determine the algebraic multiplicity of the spectrum along the imaginary axis, as well as the bifurcation index for the spectrum away from the axis, we solve the ODEs corresponding to the linearized operator. 

Throughout the course of the numerical experiments, the spectrum is visualized using blue curves, while magenta lines parallel to the imaginary axis indicate intervals of maximal algebraic multiplicities. Dashed red curves represent the bifurcation index. In some instances, the axes may be interchanged to better highlight bifurcation points because the bifurcation index remains real along the imaginary axis.  

\section{Third order equations}\label{sec:3}

\subsection{The generalized KdV equation}\label{sec:KdV}

We begin our discussion by taking $n=3$ and the spectral problem for the generalized KdV equation
\begin{equation}\label{eqn:L(KdV)}
\lambda v=v_{xxx}+(Q(x)v)_x, \qquad \lambda\in\mathbb{C},
\end{equation}
where $Q(x)$ is a real-valued function satisfying $Q(x+T)=Q(x)$ for some $T>0$, the period. 
We do not impose additional assumptions such as evenness. Clearly, \eqref{eqn:L(KdV)} can be written in the form of \eqref{eqn:JH}, where 
\[
\boldsymbol{\mathcal{J}}=\partial_x\quad\text{and}\quad 
\boldsymbol{\mathcal{L}}=\partial_{xx}+Q(x),
\]
and the $L^2(\mathbb{R})$ essential spectrum of \eqref{eqn:L(KdV)} remains invariant under the transformations 
\[
\lambda \mapsto -\lambda\quad\text{and}\quad\lambda \mapsto \overline{\lambda}.
\]

We notice that the nonzero eigenvalues of \eqref{eqn:L(KdV)} coincide with the nonzero eigenvalues of 
\begin{equation}\label{eqn:L(KdV)0}
\lambda w=w_{xxx}+Q(x)w_x.
\end{equation}
Indeed, if $\lambda\neq 0$ is an eigenvalue of \eqref{eqn:L(KdV)} then the corresponding eigenfunction $v$ must have a zero mean over the period. Let $w_x=v$, where $w$ is periodic, and we can show that $w$ satisfies \eqref{eqn:L(KdV)0}. More generally, two operators $\boldsymbol{\mathcal{L}}_1\boldsymbol{\mathcal{L}}_2$ and $\boldsymbol{\mathcal{L}}_2\boldsymbol{\mathcal{L}}_1$ share common nonzero eigenvalues. Additionally, \eqref{eqn:L(KdV)0} is the negative adjoint of \eqref{eqn:L(KdV)}, which gives half of Hamiltonian symmetry. Particularly, the eigenvalues of \eqref{eqn:L(KdV)} and, hence, \eqref{eqn:L(KdV)0} are invariant with respect to reflection across the imaginary axis. 

We can reformulate \eqref{eqn:L(KdV)0} as
\begin{equation}\label{eqn:A(KdV)}
\mathbf{w}_x=
\begin{pmatrix} 0 & 1 & 0\\ 0 & 0 & 1 \\ \lambda  & -Q(x) & 0 \end{pmatrix}\mathbf{w}=:\mathbf{A}(x,\lambda)\mathbf{w},
\end{equation}
and we verify that \text{(A1)} and \text{(A2)} hold for $\mathbf{B}(\lambda)=\begin{pmatrix}
-\lambda & 0 & 0 \\ 0 & 0 & -1 \\ 0 & 1 & 0
\end{pmatrix}$. 
Additionally, we verify that $\tr(\mathbf{A}(x,\lambda))=0$ for all $x\in\mathbb{R}$ and $\lambda\in\mathbb{C}$. We define the monodromy matrix of \eqref{eqn:A(KdV)} as
\[
\mathbf{M}(\lambda)=\mathbf{W}(T,\lambda), \quad\text{where} \quad 
\mathbf{W}_x=\mathbf{A}(x,\lambda)\mathbf{W}
\quad\text{and}\quad \mathbf{W}(0,\lambda)=\mathbf{I}_3,
\]
$\mathbf{I}_3$ is the $3\times3$ identity matrix. We define the characteristic polynomial of the monodromy matrix of \eqref{eqn:A(KdV)} as 
\[
p(\mu,\lambda)=\det(\mathbf{M}(\lambda)-\mu\mathbf{I}_3).
\]
The monodromy matrix and the characteristic polynomial inherit symmetry from \eqref{eqn:A(KdV)}. Particularly,  
\[
p(\mu,\lambda)=-\mu^3+\boldsymbol{f}(\lambda)\mu^2-\boldsymbol{f}(-\lambda)\mu+1
\quad \text{for $\lambda \in \mathbb{C}$}, 
\]
where $\boldsymbol{f}(\lambda)=\tr(\mathbf{M}(\lambda))$, and
\[
p(\mu,\lambda)=-\mu^3+\boldsymbol{f}(\lambda)\mu^2-\overline{\boldsymbol{f}(\lambda)}\mu+1\quad\text{for $\lambda \in i\mathbb{R}$}.
\]
We remark that $\boldsymbol{f}(-\lambda)=\boldsymbol{f}(\overline{\lambda})=\overline{\boldsymbol{f}(\lambda)}$ for $\lambda \in i\mathbb{R}$.

Our interest lies in the $L^2(\mathbb{R})$ essential spectrum of \eqref{eqn:L(KdV)0} along the imaginary axis, which holds significant importance when investigating the stability and instability of periodic traveling waves of the generalized KdV equation and related equations. Theorem~\ref{thm:spec3} establishes that the monodromy matrix of \eqref{eqn:A(KdV)} at $\lambda \in i\mathbb{R}$ has three simple eigenvalues on the unit circle, that is, $\lambda$ belongs to the spectrum with an algebraic multiplicity three, if and only if the Floquet discriminant $\boldsymbol{f}(\lambda)$, defined in \eqref{def:disc3}, lies within the deltoidal region bounded by the deltoid curve in \eqref{def:deltoid}. Furthermore, Theorem~\ref{thm:bifur3} demonstrates that if the spectrum of \eqref{eqn:L(KdV)0} bifurcates at $\lambda \in i \mathbb{R}$ away from the imaginary axis in a transversal manner then the bifurcation index $\varPhi_3(\lambda)$, defined in \eqref{def:res3}, becomes zero. We will proceed with numerical experiments to validate these analytical results. 
 
\subsection{Numerical experiments for equations of KdV type}\label{sec:KdV numerics}
 
We begin our numerical experiments with the Mathieu equation 
\begin{equation}\label{eqn:MKdV}
\lambda w=w_{xxx}+(4+5\cos(x))w_x,\qquad \lambda\in\mathbb{C}.
\end{equation}
Although it may not seem directly linked to a stability problem for a periodic traveling wave of the generalized KdV equation, as far as we are aware, \eqref{eqn:MKdV} does represent a spectral problem that exhibits the required symmetry.
 
\begin{figure}[htbp]
\centering
\includegraphics[width=0.45\textwidth]{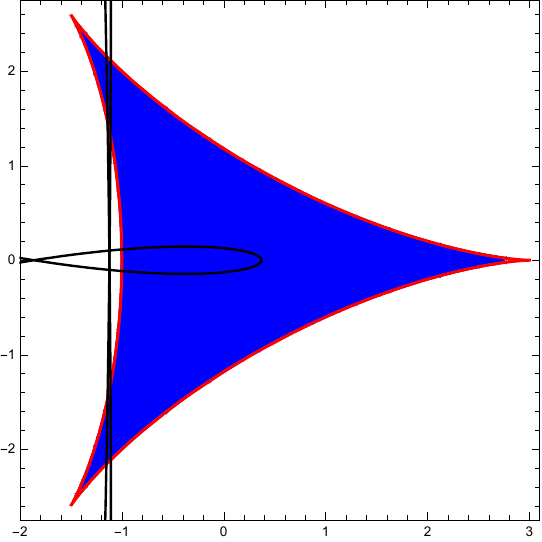}
\caption{The deltoidal region (blue) and the trajectory of the Floquet discriminant for \eqref{eqn:MKdV} (black).}
\label{fig:Mathieu1}
\end{figure}

Figure~\ref{fig:Mathieu1} depicts the deltoidal region in $\mathbb{R}^2(=\mathbb{C})$,  enclosed by the deltoid curve defined in \eqref{def:deltoid}, together with the trajectory of the Floquet discriminant $\boldsymbol{f}(\lambda)$ for \eqref{eqn:MKdV} as $\lambda$ varies over the imaginary axis. 
Our numerical observation reveals that for $\lambda \in i\mathbb{R}$ in the interval approximately $(-.4462i, .4462i)$ $\cup$  $\pm (6.0153i, 6.022i)$ $ \cup$ $ \pm (6.0451i, 6.0509i)$, 
the algebraic multiplicity is three. For other values of $\lambda \in i\mathbb{R}$, the multiplicity appears to be one.

\begin{figure}[htbp]
\centering
\includegraphics[width=0.45\textwidth]{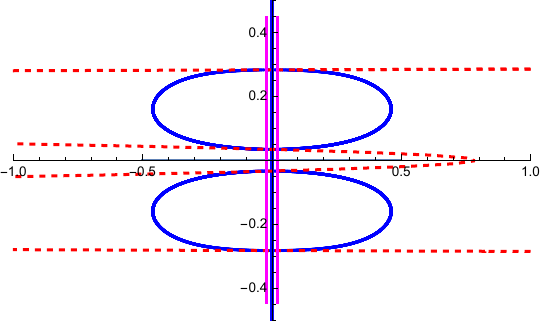}
\includegraphics[width=0.4\textwidth]{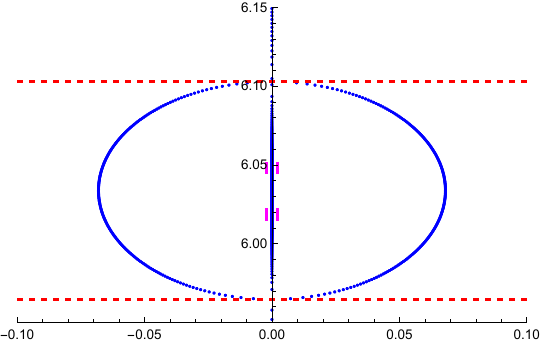}
\caption{The blue curves represent the numerically computed spectrum of \eqref{eqn:MKdV} for $\lambda \in (-0.5i,0.5i)$ (left) and $(5.95i, 6.12i)$ (right). The magenta lines indicate intervals of algebraic multiplicity three, and the dashed red curves intersect the imaginary axis where the bifurcation index is zero.}\label{fig:Mathieu2}
\end{figure}

Moving on to Figure~\ref{fig:Mathieu2}, the blue curves are the numerically computed $L^2(\mathbb{R})$ essential spectrum of \eqref{eqn:MKdV}. Additionally, the magenta lines, running parallel to the imaginary axis, indicate intervals where $\boldsymbol{f}(\lambda)$, $\lambda \in i\mathbb{R}$, resides within the deltoidal region and, consequently, $\lambda$ is in the spectrum with an algebraic multiplicity three. The dashed red curves, representing the graph of the bifurcation index $\varPhi_3(\lambda)$, intersect the imaginary axis, identifying points where bifurcations of the spectrum away from the imaginary axis are anticipated. Note that these bifurcations manifest in regions of multiplicity three (on the left) or regions of multiplicity one (on the right).

We turn our attention to the generalized KdV equation 
\begin{equation}\label{eqn:gKdV}
u_t+u_{xxx}+(u^k)_x=0, \qquad \text{$k\geq2$ an integer}. 
\end{equation}
For $k=2$ (the KdV equation) and $k=3$ (the mKdV equation), \eqref{eqn:gKdV} is integrable via the inverse scattering transform, which offers a rigorous method for determining the stability and instability of periodic traveling waves. There has, of course, been a great deal of work aimed at understanding the stability of generalized KdV traveling waves, both solitary waves and periodic wavetrains: see for instance\cite{DeconinckKapitula10,Johnson09nonlinear,MaddocksSachs93,MartelMerle01,McKean77,ScharfWreszinski81}.

A traveling wave solution of \eqref{eqn:gKdV} takes the form $u(x,t)=\phi(x-ct)$ for some $c\neq 0,\in\mathbb{R}$, the wave speed, and it satisfies by quadrature
\begin{equation*}\label{eqn:c(gKdV)}
\phi_x^2=2E+2a\phi+c\phi^2-\frac{2}{k+1}\phi^{k+1}=:G(\phi;c,a,E),
\end{equation*}
where  $a$ and $E$ are real constants. Here we assume that $\phi$ is periodic. Note that $G(\phi;c,a,E)$ is a polynomial in $\phi$, and is elliptic for $k=2$ and $3$ while it becomes hyperelliptic in general for $k\geq 4$, an integer. 

Linearizing \eqref{eqn:gKdV} about a periodic traveling wave $\phi(x; c, a, E)$ in the frame of reference moving at the speed $c$, we arrive at
\begin{equation}\label{eqn:L(gKdV)}
\lambda v-cv_x+v_{xxx}+(k\phi^{k-1}v)_x=0,\qquad \lambda\in\mathbb{C}.
\end{equation}
At $\lambda=0$, it is possible to explicitly calculate the associated monodromy matrix and the characteristic polynomial, and the followings hold true:
\begin{itemize}
\item The Floquet discriminat $\boldsymbol{f}(0)=\tr(\mathbf{M}(0))= 3$, where $\mathbf{M}(\lambda)$ denotes the monodromy matrix. Specifically, $1$ is an eigenvalue of $\mathbf{M}(0)$ with an algebraic multiplicity three and a geometric multiplicity two.
\item The discriminant of the characteristic polynomial $\Delta_3(0)=0$, where $\Delta_3(\lambda)$ is defined in \eqref{def:disc3}.
\item The bifurcation index $\varPhi_3(0)=0$, where $\varPhi_3(\lambda)$ is in \eqref{def:res3}.  
\end{itemize}
As a result, Theorem~\ref{thm:bifur3} becomes inconclusive and one must conduct a detailed modulation calculation to determine whether the $L^2(\mathbb{R})$ essential spectrum of \eqref{eqn:L(gKdV)} bifurcates at $0\in\mathbb{C}$ away from the imaginary axis. Interested readers can refer to \cite{JThesis,BJ,BJK1,JMMP1,BH}, among many others, for further elaboration. Our emphasis here is on bifurcations of the spectrum at $\lambda \neq 0, \in i\mathbb{R}$.
 
\begin{figure}[htbp]
\centering
\includegraphics[width=0.45\textwidth]{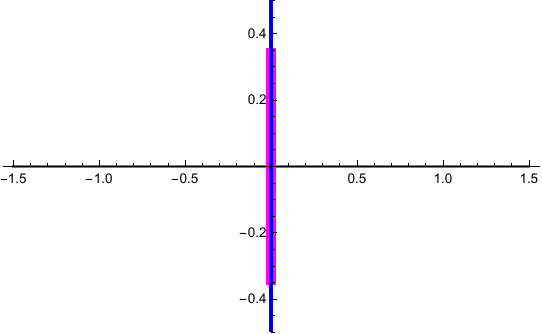}
\caption{The numerically computed spectrum of \eqref{eqn:L(gKdV)} for $k=3$ (the mKdV equation) with $c=1$, $a=\tfrac14$, and $E=0$, suggesting spectral stability. The spectrum has an algebraic multiplicity three in the interval  $\approx (-0.354i, 0.354i)$, while the multiplicity is one elsewhere along $i\mathbb{R}$. The only zero of the bifurcation index is at $0$, where the spectrum does not seem to bifurcate away from the imaginary axis.}\label{fig:mKdV1}
\end{figure}

For example, Figure~\ref{fig:mKdV1} depicts the numerically computed $L^2(\mathbb{R})$ essential spectrum of \eqref{eqn:L(gKdV)} for $k=3$---namely, the linearized mKdV equation---about a periodic traveling wave with parameters $c=1$, $a=\tfrac14$, and $E=0$. Recall \cite{BJK1} that if $G(\phi; c,a,E)=2E+2a\phi+c\phi^2-\tfrac12\phi^{4}$ has four real roots then the corresponding periodic traveling wave is modulationally stable and, correspondingly, the spectrum lies along the imaginary axis with an algebraic multiplicity three near $0\in\mathbb{C}$. We verify that $G(\phi;1,\tfrac14,0)$ indeed has four real roots. Our numerical findings corroborate this, revealing that for $\lambda \in i\mathbb{R}$ in the interval $(-\lambda_0i, \lambda_0i)$, where $\lambda_0$ is approximately $0.354$, the algebraic multiplicity is three. In the remaining intervals of the imaginary axis, the multiplicity appears to be one. Additionally, apart from the zero at $0 \in i\mathbb{R}$, no other zeros of the bifurcation index are detected. We numerically observe no bifurcations of the spectrum near $0\in\mathbb{C}$ away from the imaginary axis,
implying the spectral stability of the underlying periodic traveling wave. Indeed, all numerically computed eigenvalues have a maximal real part $< 8\times10^{-10}$, well within the numerical accuracy. 
 
\begin{figure}[htbp]
\centering
\includegraphics[width=0.45\textwidth]{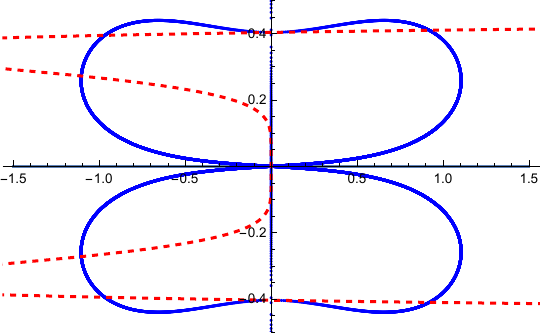}
\caption{The numerically computed spectrum of \eqref{eqn:L(gKdV)} for $k=3$ with $c=1$, $a=\tfrac15$, and $E=\tfrac25$, suggesting spectral instability. The spectrum appears to bifurcate away from the imaginary axis at $0\in\mathbb{C}$ as well as $\approx \pm0.4i$. The dashed red curves represent the graph of the bifurcation index rotated by $90^\circ$, with intersections along the imaginary axis indicating bifurcation points.}   
\label{fig:mKdV2}
\end{figure}

Figure~\ref{fig:mKdV2} shows the numerically computed $L^2(\mathbb{R})$ essential spectrum of \eqref{eqn:L(gKdV)} for $k=3$ about a periodic traveling wave for $c=1$, $a=\tfrac15$, and $E=\tfrac25$. A direct calculation reveals that $G(\phi; 1, \tfrac15, \tfrac25)$ has two real roots and two complex roots, whereby it follows from \cite{BJK1} that the underlying periodic traveling wave is modulationally unstable. Indeed, we numerically observe that a spectral curve emerges from $0\in\mathbb{C}$ along the imaginary axis, accompanied by two additional spectral curves emerging in a non-tangential manner, consistent with underlying Hamiltonian symmetry. Additionally, we numerically observe that the algebraic multiplicity is one for the entire spectrum. The dashed red curves represent the graph of the bifurcation index rotated by $90^\circ$. The intersections of these curve with the imaginary axis identify the points where bifurcations of the spectrum away from the imaginary axis are anticipated. The agreement between the numerical findings and analytical predictions is strong.  
 
\begin{figure}[htbp]
\centering
\includegraphics[width=0.45\textwidth]{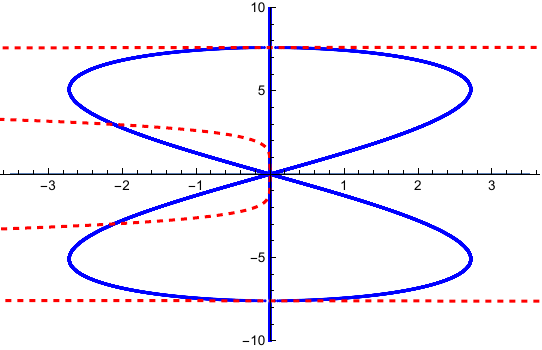}
\caption{The numerically computed spectrum of \eqref{eqn:L(gKdV)} for $k=4$ with $c=1$, $a=2$, and $E=0$. The spectrum appears to bifurcate away from the imaginary axis at $0\in\mathbb{C}$ and $\approx \pm 7.6i$. The dashed red curves represent the graph of the bifurcation index rotated by $90^\circ$, with intersections along the imaginary axis indicating bifurcation points.}   
\label{fig:mKdV3}
\end{figure}

Lastly, Figure~\ref{fig:mKdV3} shows the numerically computed $L^2(\mathbb{R})$ essential spectrum of \eqref{eqn:L(gKdV)} for $k=4$ about a periodic traveling wave for $c=1$, $a=2$, and $E=0$, suggesting mondulational instability. Indeed, our numerical observation reveals that bands of unstable spectrum bifurcate from $0\in\mathbb{C}$, exhibiting a characteristic ``figure eight'' curve that is consistent with underlying Hamiltonian symmetry. These bands of unstable spectrum then rejoin the imaginary axis at approximately $\pm 7.6i$, the zeros of the bifurcation index. 

\subsection{The generalized BBM equation}\label{sec:BBM}

Another illustrative example is the generalized BBM equation
\begin{equation}\label{eqn:gBBM}
u_t-u_{xxt}+u_x+g(u)_x=0
\end{equation} 
for some nonlinearity $g$ or, equivalently,  
\begin{align}\label{eqn:gBBM'}
u_t-u_{xxt}+cu_{xxx}+(1-c)u_x+g(u)_x=0
\end{align}
in the frame of reference moving at some $c\neq0, \in\mathbb{R}$, the wave speed. A traveling wave solution of \eqref{eqn:gBBM} corresponds to a stationary solution of \eqref{eqn:gBBM'}, and it satisfies
\[
c\phi_{xxx}+(1-c)\phi_x+(g(\phi))_x=0.
\]
Here we assume that $\phi(x+T)=\phi(x)$ for some $T>0$, the period. 

Linearizing \eqref{eqn:gBBM'} about a stationary solution $\phi(x)$, we arrive at 
\begin{equation}\label{eqn:L(gBBM)}
\lambda v-\lambda v_{xx}+cv_{xxx}+(1-c)v_x+(g'(\phi)v)_x=0, \qquad \lambda\in\mathbb{C},
\end{equation}
where the prime means ordinary differentiation. Introducing
\[
v_2=-\lambda v_x+cv_{xx}+(1-c)v+g(\phi)v \quad\text{and}\quad
v_3=-\lambda v+cv_x, 
\]
we can reformulate \eqref{eqn:L(gBBM)} as 
\begin{align}\label{eqn:A(gBBM)}
\begin{pmatrix}v \\ v_2 \\ v_3 \end{pmatrix}_x
& = \begin{pmatrix}
\tfrac{\lambda}{c} & 0 & \tfrac{1}{c} \\
-\lambda & 0 & 0 \\ 
-Q(x) & 1 & 0 
\end{pmatrix}
\begin{pmatrix}v \\ v_2 \\ v_3 \end{pmatrix}
=:\mathbf{A}(x;\lambda)\mathbf{v},
\end{align}
where $Q(x)=g'(\phi(x))+1-c$, and we verify that \text{(A1)} and \text{(A2)} hold for 
\begin{equation} \label{def:B(gBBM)}
\mathbf{B}(\lambda)=\begin{pmatrix}
-\lambda & 0 & 1 \\ 0 & \tfrac1\lambda & 0 \\ - 1 & 0 & 0 
\end{pmatrix}, 
\end{equation}
provided that $\lambda\neq0$. However, it is important to note that $\tr(\mathbf{A}(x,\lambda))\neq0$ for $\lambda \neq 0$. 
A spectral analysis for \eqref{eqn:L(gBBM)} follows a similar approach to that in Section~\ref{sec:KdV} for the generalized KdV equation. Further details on a modulational calculation can be found in \cite{J2} and \cite{JThesis}, among others.

Let $\mathbf{M}(\lambda)$ denote the monodromy matrix of \eqref{eqn:A(gBBM)}. The characteristic polynomial of the monodromy matrix (see Johnson\cite{J2}) takes the form
\[
p(\mu,\lambda)=\det(\mathbf{M}(\lambda)-\mu\mathbf{I}_3)=-\mu^3+\boldsymbol{f}(\lambda)\mu^2-\boldsymbol{f}(-\lambda)e^{\frac{\lambda T}{c}}\mu+e^{\frac{\lambda T}{c}},
\qquad \lambda \in \mathbb{C},
\]
where $\boldsymbol{f}(\lambda)=\tr(\mathbf{M}(\lambda))$. The discriminant of the characteristic polynomial is no longer real for $\lambda \in i\mathbb{R}$. Instead, 
\begin{align*}
\Delta_3(\lambda):=&e^{-\frac{2\lambda T}{c}}\disc_\mu p(\mu,\lambda) \\
=&\boldsymbol{f}(\lambda)^2\boldsymbol{f}(-\lambda)^2-4\boldsymbol{f}(\lambda)^3e^{-\frac{\lambda T}{c}}-4\boldsymbol{f}(-\lambda)^3e^{\frac{\lambda T}{c}}+18\boldsymbol{f}(\lambda)\boldsymbol{f}(-\lambda)-27
\end{align*}
is real for $\lambda \in i\mathbb{R}$. We observe that $\Delta_3(\lambda)=0$, $\lambda \in i\mathbb{R}$, when $\boldsymbol{f}(\lambda)$ lies on the deltoid curve, defined in \eqref{def:deltoid}, rotated through an angle of $\tfrac{\lambda T}{3c}$, whereby $\Delta_3$ can be used to determine the algebraic multiplicity of the $L^2(\mathbb{R})$ essential spectrum of \eqref{eqn:L(gBBM)} along the imaginary axis. The resultant of $p(\mu,\lambda)$ and $p_\lambda(\mu,\lambda)$ is likewise no longer real for $\lambda \in i\mathbb{R}$. Instead,
\begin{align*}
\varPhi_3(\lambda):=&e^{-\frac{3\lambda T}{c}}\res_\mu(p(\mu,\lambda), p_\lambda(\mu,\lambda)) \\
=&e^{-\frac{\lambda T}{c}}\boldsymbol{f}'(\lambda)^3+e^{\frac{\lambda T}{c}}\boldsymbol{f}'(-\lambda)^3
+\boldsymbol{f}(\lambda)\boldsymbol{f}'(-\lambda)^2\boldsymbol{f}'(\lambda)+\boldsymbol{f}(-\lambda)\boldsymbol{f}'(\lambda)^2\boldsymbol{f}'(-\lambda) \\
&+e^{\frac{\lambda T}{c}}\frac{T^2}{c^2}(\boldsymbol{f}(\lambda)^2\boldsymbol{f}'(\lambda)+\boldsymbol{f}(-\lambda)^2\boldsymbol{f}'(-\lambda)) \\
&-e^{\frac{\lambda T}{c}}\frac{2T}{c}(\boldsymbol{f}(\lambda)\boldsymbol{f}'(\lambda)^2+\boldsymbol{f}(-\lambda)\boldsymbol{f}'(-\lambda)^2) \\
&+\frac{T^3}{c^3}(\boldsymbol{f}(\lambda)\boldsymbol{f}(-\lambda)+1)
+\frac{T^2}{c^2}(\boldsymbol{f}(\lambda)\boldsymbol{f}'(-\lambda)+\boldsymbol{f}'(\lambda)\boldsymbol{f}(-\lambda)) \\
&-\frac{T}{c}(\boldsymbol{f}(\lambda)\boldsymbol{f}'(\lambda)\boldsymbol{f}(-\lambda)\boldsymbol{f}'(-\lambda)+3\boldsymbol{f}'(\lambda)\boldsymbol{f}'(-\lambda))
\end{align*}
is real for $\lambda \in i\mathbb{R}$, and if the spectrum of \eqref{eqn:L(gBBM)} bifurcates at $\lambda \in i\mathbb{R}$ away from the imaginary axis then $\varPhi_3(\lambda)=0$. 

We remark that the exponential prefactor in the definitions of $\Delta_3$ and $\varPhi_3$ does not affect the roots of $\disc_\mu(p(\mu,\lambda))$ and $\res_\mu(p(\mu,\lambda), p_\lambda(\mu,\lambda))$, and it is included because, in practice, working with real-valued polynomials aids in root finding, compared with dealing with complex-valued polynomials.
Additionally, as was the case for the generalized KdV equation, under some additional assumptions, $\varPhi_3(\lambda)=0$ also serves as a sufficient condition for the spectrum to bifurcate away from the imaginary axis. 

Alternatively, let 
\[
\mathbf{v}_0=e^{-\frac{\lambda x}{3c}}\mathbf{v},
\]
and we can demonstrate that if $\mathbf{v}$ satisfies \eqref{eqn:A(gBBM)} then $\mathbf{v}_0$ satisfies
\begin{equation}\label{eqn:A0(gBBM)}
{\mathbf{v}_0}_x=\left(\mathbf{A}(x,\lambda)-\frac{\lambda}{3c}\mathbf{I}_3\right)\mathbf{v}_0=:\mathbf{A}_0(x,\lambda)\mathbf{v}_0.
\end{equation}
We verify that \text{(A1)} and \text{(A2)} remain valid for \eqref{def:B(gBBM)}. An important difference here is that  $\tr(\mathbf{A}_0(x,\lambda))=0$ for all $x\in\mathbb{R}$ for all $\lambda\in\mathbb{C}$. Let $\mathbf{M}_0(\lambda)$ denote the monodromy matrix of \eqref{eqn:A0(gBBM)} and the characteristic polynomial of the monodromy matrix is
\begin{align*}
&p_0(\mu,\lambda)
=-\mu^3+\boldsymbol{f}_0(\lambda)\mu^2-\boldsymbol{f}_0(-\lambda))\mu+1 
\quad \text{for $\lambda\in\mathbb{C}$},
\intertext{where $\boldsymbol{f}_0(\lambda)=\tr(\mathbf{M}_0(\lambda))$, and}
&p_0(\mu,\lambda)=-\mu^3+\boldsymbol{f}_0(\lambda)\mu^2-\overline{\boldsymbol{f}_0(\lambda)}\mu+1 
\quad \text{for $\lambda\in i\mathbb{R}$}.
\end{align*}

Note that if $p(\mu,\lambda)=0$ then $p_0\big(e^{\frac{\lambda T}{3c}}\mu,\lambda\big)=0$. In other words, a root of $p_0(\cdot, \lambda)$ for $\lambda \in i\mathbb{R}$ is a rotation of a root of $p(\cdot, \lambda)$. Consequently, the number of eigenvalues of $\mathbf{M}(\lambda)$ on the unit circle is the same as the number of eigenvalues of $\mathbf{M}_0(\lambda)$ on the unit circle. Note that 
\[
\boldsymbol{f}_0(\lambda)=\tr(\mathbf{M}_0(\lambda))=e^{\frac{\lambda T}{3c}}\tr(\mathbf{M}(\lambda))=e^{\frac{\lambda T}{3c}}\boldsymbol{f}(\lambda),
\]
and we calculate
\begin{align*}
\res_\mu(&p_0(\mu,\lambda), {p_0}_\lambda(\mu,\lambda)) \\
=&e^{\frac{\lambda T}{c}}\left(\frac{T}{3c}\boldsymbol{f}(\lambda) +\boldsymbol{f}'(\lambda)\right)^3-e^{-\frac{\lambda T}{c}}\left(\frac{T}{3c}\boldsymbol{f}(-\lambda)+\boldsymbol{f}'(-\lambda)\right)^3 \\
&+\boldsymbol{f}(\lambda)\left(\frac{T}{3c}\boldsymbol{f}(-\lambda)+\boldsymbol{f}'(-\lambda)\right)^2
\left(\frac{T}{3c}\boldsymbol{f}(\lambda)+\boldsymbol{f}'(\lambda)\right) \\ 
&-\boldsymbol{f}(-\lambda)\left(\frac{T}{3c}\boldsymbol{f}(\lambda)+\boldsymbol{f}'(\lambda)\right)^2\left(\frac{T}{3c}\boldsymbol{f}(-\lambda)+\boldsymbol{f}'(-\lambda)\right).
\end{align*}
We deduce that $\res_\mu(p(\mu,\lambda), {p}_\lambda(\mu,\lambda))=0$ for $\lambda\in i\mathbb{R}$ if and only $\res_\mu(p_0(\mu,\lambda), {p_0}_\lambda(\mu,\lambda))=0$. 

Therefore, Theorem~\ref{thm:spec3} establishes that the monodromy matrix of \eqref{eqn:A(gBBM)} at $\lambda \in i\mathbb{R}$ has three simple eigenvalues on the unit circle, that is, $\lambda$ belongs to the $L^2(\mathbb{R})$ essential spectrum with an algebraic multiplicity three if and only if $\Delta_3(\lambda)=e^{-\frac{2\lambda T}{c}}\disc_\mu p(\mu,\lambda)$ or $\disc_\mu(p_0(\mu,\lambda))<0$. The latter holds when the Floquet discriminant for \eqref{eqn:A0(gBBM)} lies within the deltoidal region bounded by the deltoid curve in \eqref{def:deltoid}. Furthermore, Theorem~\ref{thm:bifur3} demonstrates that if the spectrum of \eqref{eqn:L(KdV)0} bifurcates at $\lambda \in i \mathbb{R}$ away from the imaginary axis in a transversal manner then the bifurcation index $\varPhi_3(\lambda)=e^{-\frac{3\lambda T}{c}}\res_\mu(p(\mu,\lambda), p_\lambda(\mu,\lambda))$ or $\res_\mu(p_0(\mu,\lambda), {p_0}_\lambda(\mu,\lambda))$ becomes zero. We will present numerical experiments to validate these analytical findings.

\begin{remark*}
When $\lambda=0$, we may assume without loss of generality that $c=1$, and \eqref{eqn:L(gBBM)} simplifies by quadrature into the Hill's equation 
\[
v_{xx}+Q(x)v=v_0(x),
\]
where $v_0(x)$ represents the (time-independent) initial value of $v$. 
\end{remark*} 

\subsection{Numerical experiments for equations of BBM type}\label{sec:BBM numerics}

\begin{figure}[htbp]
\centering
\includegraphics[width=0.45\textwidth]{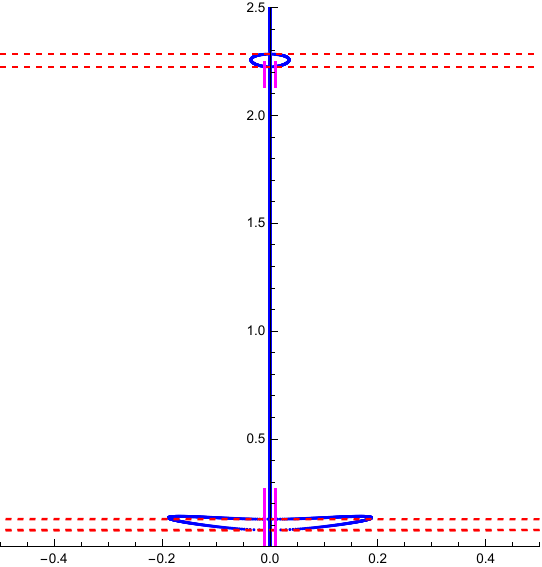}
\caption{The numerically computed spectrum for \eqref{eqn:MBBM}. The magenta lines indicate intervals of algebraic multiplicity three, and the dashed red lines intersect the imaginary axis where the bifurcation index becomes zero.}
\label{fig:MBBM}
\end{figure}

We begin our numerical experiments with
\begin{equation}\label{eqn:MBBM}
\lambda(v-v_{xx})+v_{xxx}+((4+5\cos(x))v)_x=0,\qquad \lambda\in\mathbb{C}.
\end{equation}
Similar to the Mathieu equation, \eqref{eqn:MBBM} may not seem to correspond to a stability problem for a periodic traveling wave of the generalized BBM equation, as far as we are aware, but it serves as a valuable test case. 

In Figure~\ref{fig:MBBM}, the blue curves are the numerically computed $L^2(\mathbb{R})$ essential spectrum of \eqref{eqn:MBBM}. It is noteworthy that the imaginary axis is included in the spectrum. Additionally, the magenta lines, running parallel to the imaginary axis, indicate intervals where the spectrum has an algebraic multiplicity three. We numerically observe that these intervals are approximately $(-2.25 i,-2.13 i)$, $(-0.27 i,0.27 i)$ and $(2.13 i,2.25 i)$. The multiplicity appears to be one in the remaining intervals of the imaginary axis. 
The dashed red lines, representing the graph of the bifurcation index rotated by $90^\circ$, intersect the imaginary axis, identifying the zeros of the bifurcation index where bifurcations of the spectrum away from the imaginary axis are anticipated. Our numerical findings corroborate this, revealing two isolas in the upper half-plane, approximately within the ranges of $0.77 i$ to $0.128 i$ as well as $2.225 i$ to $2.285 i$.


\begin{figure}[htbp]
\centering
\includegraphics[width=0.45\textwidth]{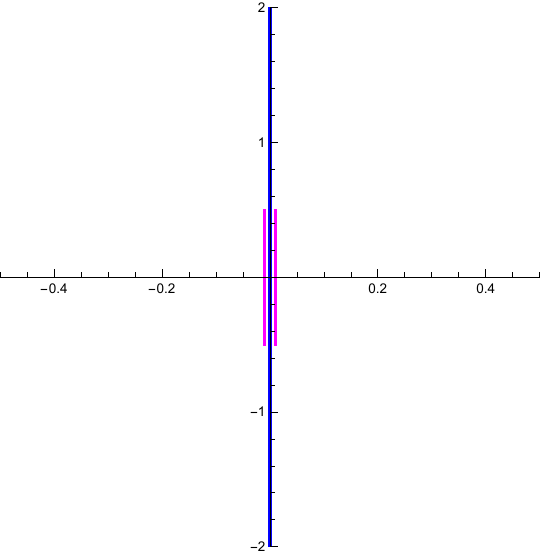}
\caption{The numerically computed spectrum of the linearized operator of \eqref{eqn:BBM} about \eqref{def:phi(BBM)}, suggesting spectral stability. The spectrum has an algebraic multiplicity three in the interval $\approx (-0.5i, 0.5i)$, while the multiplicity appears to be one elsewhere along $i\mathbb{R}$. The only zero of the bifurcation index occurs at $\lambda=0$, where bifurcation of the spectrum away from the imaginary axis does not seem to occur.} 
\label{fig:BBM}
\end{figure}

We turn our attention to the BBM equation
\begin{equation}\label{eqn:BBM}
u_t-u_{xxt}+u_x+uu_x=0.
\end{equation}
A traveling wave solution of \eqref{eqn:BBM} takes the form $u(x,t)=\phi(x-ct)$ for some $c\neq0, \in \mathbb{R}$, the wave speed, and it satisfies by quadrature
\[
c\phi_x^2=2E+2a\phi+(1-c)\phi^2-\tfrac13\phi^3,
\]
where $a$ and $E$ are real constants. Here we assume that $\phi$ is periodic. 

For example, Figure~\ref{fig:BBM} depicts the numerically computed $L^2(\mathbb{R})$ essential spectrum of the linearized operator of \eqref{eqn:BBM} about a periodic traveling wave for $c=1$, $a=\tfrac76$ and $E=-1$, which can be expressed in closed form as
\begin{equation}\label{def:phi(BBM)}
\phi(x)=1+\operatorname{cn}^2\Big(\sqrt{\tfrac{5}{12}}x;\tfrac15\Big).
\end{equation}
Here ``$\operatorname{cn}$'' represents a Jacobi elliptic function, the elliptic cosine function. It is worth noting that $\phi$ oscillates between a minimum of $\phi=1$ and a maximum of $\phi=2$ with a period $2\sqrt{\tfrac{12}{5}}K\left(\tfrac15\right)\approx 5.142$, where $K$ is the complete elliptic integral of the first kind.
We numerically observe that for $\lambda \in i\mathbb{R}$ in the interval $(-\lambda_0i,\lambda_0i)$, where $\lambda_0$ is approximately $0.5$, the algebraic multiplicity is three. Elsewhere along the imaginary axis, the multiplicity appears to be one. Additionally, apart from the zero at $0\in i\mathbb{R}$, no other zeros of the bifurcation index are found. We numerically observe no bifurcations of the spectrum near $0\in\mathbb{C}$ away from the imaginary axis, implying the spectral stability of the underlying periodic traveling wave. Indeed, all numerically computed eigenvalues have a maximal real part $<3.2\times10^{-8}$. 

\begin{figure}
\centering
\includegraphics[width=0.4\textwidth]{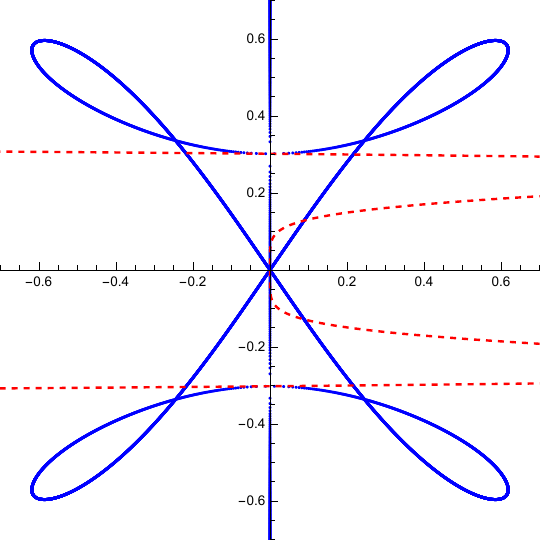}
\caption{The numerically computed spectrum of the linearized operator of \eqref{eqn:mBBM} about \eqref{def:phi(mBBM)} for $m=\tfrac12+\tfrac{\sqrt{5}}{10}$. The spectrum has an algebraic multiplicity one except at $0\in\mathbb{C}$. The bifurcation index has zeros at $0$ and as well as $\approx \pm 0.3 i$. }   
\label{fig:mBBM}
\end{figure}
 
Lastly, let's consider the (focusing) mBBM equation
\begin{equation}\label{eqn:mBBM}
u_t-u_{xxt}+(u^3)_x=0
\end{equation}
and a cnoidal solution
\begin{equation}\label{def:phi(mBBM)}
u(x,t)=\sqrt{\frac{2m}{2m-1}}\operatorname{cn}\left(\frac{x-t}{\sqrt{2m-1}};m\right), \qquad m>1/2. 
\end{equation}
Although our numerical investigation is not exhaustive, the stability and instability of periodic traveling waves of \eqref{eqn:mBBM} seem to resemble those of the mKdV equation. Specifically, the snoidal solutions (for the defocusing equation) and dnoidal solutions (for the focusing equation) appear stable while the cnoidal solutions seem to exhibit instability. 

Figure~\ref{fig:mBBM} shows the numerically computed $L^2(\mathbb{R})$ essential spectrum of the linearized operator of \eqref{eqn:mBBM} about a cnoidal solution \eqref{def:phi(mBBM)} for $m=\tfrac12+\tfrac{\sqrt{5}}{10}\approx.7236$. Modulational instability is evident, with the spectrum bifurcating away from the imaginary axis at $0\in\mathbb{C}$ and approximately $\pm 0.3 i$. These bifurcation points align with the three zeros of the bifurcation index.

\subsection{Spectrum along the imaginary axis towards $\pm i\infty$}\label{sec:asym3}

Applying Theorem~\ref{thm:asym} to \eqref{eqn:L(gKdV)}, where $\lambda=i\nu^3$ and $|\nu| \gg1$, we obtain
\[
\Delta_3(i\nu^3) \sim 16 \sinh^2\left(\tfrac{\sqrt{3}}{2}\nu T\right)
\left(\cos\left(\tfrac32\nu T\right)-\cosh\left(\tfrac{\sqrt{3}}{2}\nu T\right)\right)^2>0,
\]
provided that $\nu \neq 0$. Consequently, we deduce from Theorem~\ref{thm:spec3} that $\lambda \in i\mathbb{R}$, $|\lambda|\gg1$, is in the $L^2(\mathbb{R})$ essential spectrum of \eqref{eqn:L(gKdV)} with an algebraic multiplicity one. Furthermore,
\begin{align*}
\varPhi_3(i\nu^3)&\sim-\frac{1}{27}\frac{T^3}{\nu^6}\Delta_3(i\nu^3) \\ 
&=-\frac{16}{27}\frac{T^3}{\nu^6}\sinh^2\left(\tfrac{\sqrt{3}}{2}\nu T\right)
\left(\cos\left(\tfrac32\nu T\right)-\cosh\left(\tfrac{\sqrt{3}}{2}\nu T\right)\right)^2<0.
\end{align*}
Consequently, Theorem~\ref{thm:bifur3} implies that only a finite number of points along the imaginary axis can exhibit transversal bifurcation of the spectrum away from the axis.

Note that the linearization of \eqref{eqn:gBBM} does not adhere to the form \eqref{eqn:a}, which prevents us from directly applying Theorem \ref{thm:asym}. Had it been possible to construct a full-dimensional and linearly independent set of asymptotic solutions, either through the WKB method or alternative means, we could have followed the argument in a similar manner to the proof of Theorem \ref{thm:asym} to derive leading-order expressions for the discriminant of the characteristic polynomial and the bifurcation index. However, our attempts to identify such a set of solutions were unsuccessful, leaving us unable to generate the required formulae.  
 
\section{Fourth order equations}\label{sec:4}
 
\subsection{The nonlinear Schr\"odinger equation}\label{sec:NLS}

We turn our attention to $n=4$ and the spectral problem for the nonlinear Schr\"odinger equation 
\begin{subequations}\label{eqn:L(NLS)}
\begin{equation}\label{def:NLS}
\lambda\begin{pmatrix}
v_1 \\ v_2
\end{pmatrix}=
\begin{pmatrix}
\boldsymbol{\mathcal{K}} & \boldsymbol{\mathcal{L}}_+ \\ -\boldsymbol{\mathcal{L}}_- & \boldsymbol{\mathcal{K}}
\end{pmatrix}
\begin{pmatrix}
v_1 \\ v_2
\end{pmatrix},
\end{equation}
where 
\begin{gather}
\boldsymbol{\mathcal{L}}_+=\boldsymbol{\mathcal{L}}^\dagger_+=\partial_{xx}+Q_+(x), \qquad
\boldsymbol{\mathcal{L}}_-=\boldsymbol{\mathcal{L}}^\dagger_- =\partial_{xx}+Q_-(x), \label{def:L}
\intertext{and}
\boldsymbol{\mathcal{K}}=-\boldsymbol{\mathcal{K}}^\dagger=\tfrac12R'(x)+R(x)\partial_x. \label{def:K}
\end{gather}
\end{subequations}
Here $Q_\pm(x)$ and $R(x)$ are real-valued, smooth, and $T$-periodic functions for some $T>0$. We introduce 
\begin{align}
w_1= v_1-\tfrac12R(x)v_2\quad\text{and}\quad w_2= -{v_2}_x-\tfrac12R(x)v_1,
\end{align}
which is essentially the nonzero phase analogue of what is presented in \cite{J88}, and we can reformulate \eqref{eqn:L(NLS)} as 
\begin{equation}\label{eqn:A(NLS)}
\begin{pmatrix}
v_1 \\ v_2 \\ w_1 \\ w_2 
\end{pmatrix}_x
=\begin{pmatrix}
0 & \tfrac12R(x) & 1 & 0 \\ 
-\tfrac12R(x) & 0 & 0 & -1 \\ 
-Q_-(x) + \tfrac14R(x)^2 & - \lambda & 0 & -\tfrac12R(x) \\ 
-\lambda & Q_+(x)-\tfrac14R(x)^2 & \tfrac12R(x) & 0 
\end{pmatrix}
\begin{pmatrix}
v_1 \\ v_2 \\ w_1 \\ w_2 
\end{pmatrix}.
\end{equation}
In short, 
\[
\mathbf{v}_x=\mathbf{A}(x,\lambda)\mathbf{v}.
\]
We verify that \text{(A1)} and \text{(A2)} hold for 
\[
\mathbf{B}=-\mathbf{B}^{-1}=\begin{pmatrix} 0 & 0 & -1 & 0 \\ 0 & 0 & 0 & 1 \\ 1 & 0 & 0 & 0 \\ 0 & -1& 0 & 0 \end{pmatrix}.
\]
Additionally, we verify that $\tr(\mathbf{A}(x,\lambda))=0$ for all $x\in\mathbb{R}$ for all $\lambda\in\mathbb{C}$.

We define the monodromy matrix of \eqref{eqn:A(NLS)} as
\[
\mathbf{M}(\lambda)=\mathbf{V}(T,\lambda),
\quad\text{where}\quad \mathbf{V}_x=\mathbf{A}(x,\lambda)\mathbf{V}
\quad\text{and}\quad \mathbf{V}(0,\lambda)=\mathbf{I}_4,
\]
$\mathbf{I}_4$ is the $4\times 4$ identify matrix. We define the characteristic polynomial of the monodromy matrix of \eqref{eqn:A(NLS)} as 
\[
p(\mu,\lambda)=\det(\mathbf{M}(\lambda)-\mu\mathbf{I}_4).
\]
We deduce from Lemma~\ref{lem:symm} that 
\[p(\mu,\lambda)=\mu^4-\boldsymbol{f}(\lambda)\mu^3+\boldsymbol{g}(\lambda)\mu^2-\boldsymbol{f}(-\lambda)\mu+1, \qquad \lambda\in\mathbb{C},
\]
where $\boldsymbol{f}(\lambda)$ and $\boldsymbol{g}(\lambda)$ are defined in \eqref{def:f123}. Importantly, $\boldsymbol{g}(\lambda)$ is real for $\lambda \in i\mathbb{R}$. 

Our interest lies in the $L^2(\mathbb{R})$ essential spectrum of \eqref{eqn:L(NLS)} along the imaginary axis, which holds significant importance when investigating the stability and instability of periodic traveling waves of the nonlinear Schr\"odinger equation and related equations. Theorem~\ref{thm:spec4} establishes distinct regions in $\mathbb{R}^3$ corresponding to varying algebraic multiplicities, by utilizing the Floquet discriminant defined in \eqref{def:f123}. Theorem~\ref{thm:bifur+} demonstrates that if the spectrum of \eqref{eqn:L(NLS)} bifurcates at $\lambda \in i\mathbb{R}$ away from the imaginary axis in a transversal manner then the bifurcation index $\varPhi_4(\lambda)$, defined in \eqref{def:res+}, reaches zero. 

\subsection{Trivial phase solutions}\label{sec:NLS0}

Suppose for simplicity that $\boldsymbol{\mathcal{K}}=0$, and \eqref{eqn:L(NLS)} becomes
\begin{equation}\label{eqn:L(NLS)0}
\begin{aligned} 
&\lambda v_1=\boldsymbol{\mathcal{L}}_+v_2={v_2}_{xx}+Q_+(x)v_2, \\
&\lambda v_2=-\boldsymbol{\mathcal{L}}_-v_1=-{v_1}_{xx}-Q_-(x)v_1
\end{aligned}
\end{equation}
for $\lambda\in\mathbb{C}$, where $Q_\pm(x)$ are real-valued smooth functions satisfying $Q_\pm(x+T)=Q_\pm(x)$ for some $T>0$, the period. This arises in the stability problem for trivial phase solutions of the nonlinear Schr\"odinger equation, among other applications. The first author and Rapti \cite{BR1} have determined the conditions under which the eigenvalues of the monodromy matrix associated with \eqref{eqn:L(NLS)0} lie on the unit circle. It is noteworthy that the monodromy matrix and the characteristic polynomial exhibit additional symmetry. Specifically, \[
\mathbf{M}^\top(x,\lambda)\begin{pmatrix} \mathbf{0} & - \mathbf{I}_2 \\ \mathbf{I}_2 & \mathbf{0}\end{pmatrix}=-\begin{pmatrix} \mathbf{0} & - \mathbf{I}_2 \\ \mathbf{I}_2 & \mathbf{0}\end{pmatrix}\mathbf{M}(x,\lambda)
\]
for all $x\in\mathbb{R}$ for all $\lambda\in\mathbb{C}$. Importantly, the characteristic polynomial of the monodromy matrix takes the form 
\[
p(\mu,\lambda)=\mu^4-f(\lambda)\mu^3+g(\lambda)\mu^2-f(\lambda)\mu+1, 
\]
where both $f(\lambda)$ and $g(\lambda)$ are real-valued and even for $\lambda\in i\mathbb{R}$.

A direct calculation leads to that \eqref{def:pp} becomes
\[
p^\sharp(\nu,\lambda)=(2+2f(\lambda)+g(\lambda))\nu^4+(-12+2g(\lambda))\nu^2+2-2f(\lambda)+g(\lambda).
\]
Let
\begin{align*}
\Delta_4(\lambda):=&\disc_\nu p^\sharp(\nu,\lambda) \\
=&-4096(8+f(\lambda)^2-4g(\lambda))^2(2+2f(\lambda)+g(\lambda))(-2+2f(\lambda)-g(\lambda)),
\end{align*}
and 
\begin{align*}
&P_4(\lambda)=16(-6+g(\lambda))(2-2f(\lambda)+g(\lambda)), \\
&D_4(\lambda)=-256(8+f(\lambda)^2-4g(\lambda))(2-2f(\lambda)+g(\lambda))^2.
\end{align*}
Suppose that $\lambda \in i\mathbb{R}$ and $\Delta_4(\lambda), P_4(\lambda), D_4(\lambda)\neq0$. Following the proof of Theorem~\ref{thm:spec4}, we can establish that the number of distinct eigenvalues of the monodromy matrix for \eqref{eqn:L(NLS)0} on the unit circle or, equivalently, the algebraic multiplicity of the $L^2(\mathbb{R})$ essential spectrum of \eqref{eqn:L(NLS)0} is:
\begin{itemize}
\item four if $\Delta_4(\lambda)>0$ and $P_4(\lambda),D_4(\lambda)<0$, 
\item two if $\Delta_4(\lambda)<0$, 
\item zero if $\Delta_4(\lambda)>0$ and either $P_4(\lambda)$ or $D_4(\lambda)$ is positive.
\end{itemize}

\begin{figure}[htbp]
\centering
\includegraphics[width=0.45\textwidth]{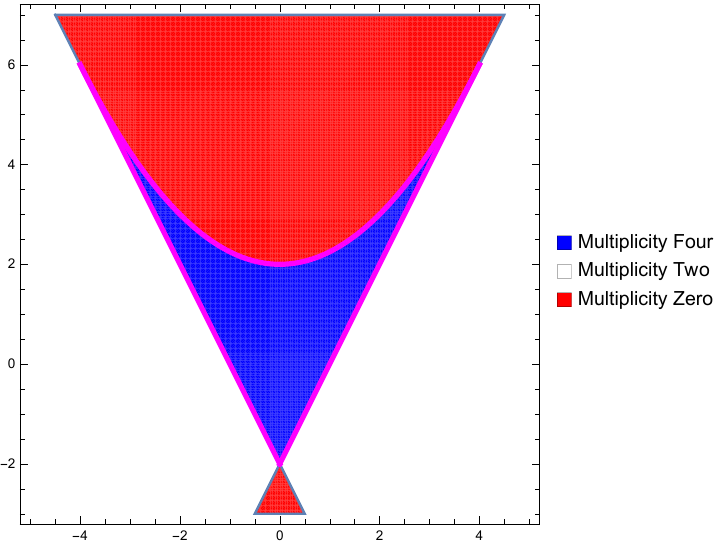}
\caption{The regions in $\mathbb{R}^2$ for different algebraic multiplicities of the $L^2(\mathbb{R})$ essential spectrum of \eqref{eqn:L(NLS)0} along the imaginary axis.} 
\label{fig:region40}
\end{figure}

Figure~\ref{fig:region40} depicts the regions in the $(f,g)$ plane, corresponding to different numbers of eigenvalues of the monodromy matrix for \eqref{eqn:L(NLS)0} on the unit circle or, equivalently, varying algebraic multiplicities of the $L^2(\mathbb{R})$ essential spectrum along the imaginary axis. The blue region corresponds to a multiplicity four, the red region corresponds to a multiplicity zero, and the remaining regions of $\mathbb{R}^2$ corresponds to a multiplicity two. It is important to note that these regions are obtained by restricting the regions in the right panel of Figure~\ref{fig:region34}, in the $(f_1,f_2,f_3)$ coordinates, to the $f_2=0$ plane. 

These regions in Figure~\ref{fig:region40} effectively determine the algebraic multiplicity of the spectrum over a dense open subset of the $(f,g)$ plane. Further investigation would be required to classify the borderline cases where one or more of $\Delta_4, P_4, D_4$ vanishes. Details regarding these cases are not provided here. 

Additionally, let
\begin{align*}
\varPhi_4(\lambda):=&\res_\mu(p(\mu,\lambda),p_\lambda(\mu,\lambda)) \\
=&(g'(\lambda)^2 +(g(\lambda)- 2)f'(\lambda)^2 - f(\lambda) f'(\lambda) g'(\lambda))^2,
\end{align*}
and Theorem~\ref{thm:bifur+} demonstrates that if the $L^2(\mathbb{R})$ essential spectrum of \eqref{eqn:L(NLS)0} bifurcates at $\lambda \in i\mathbb{R}$ away from the imaginary axis then $\varPhi_4(\lambda)=0$.

As an alternative approach, we introduce $\omega = \mu + \frac{1}{\mu}$, which is well-defined because $p(0,\lambda)=1$ for all $\lambda\in\mathbb{C}$. This conformally maps the unit circle to the interval $[-2,2]$. A straightforward calculation leads to that the characteristic equation for the monodromy matrix associated with \eqref{eqn:L(NLS)0} becomes
\[
\omega^2-f(\lambda)\omega+g(\lambda)-2=0,
\]
and the roots are given by
\[
\omega_{\pm}(\lambda)=\frac{f(\lambda)\pm \sqrt{f(\lambda)^2-4g(\lambda)+8}}{2}.
\] 
We can demonstrate that the monodromy matrix for \eqref{eqn:L(NLS)0} has two eigenvalues on the unit circle provided that $\omega_+(\lambda) \in [-2,2]$, and it has additional two eigenvalues on the unit circle provided $\omega_-(\lambda) \in [-2,2]$.
Furthermore, we can show that $\varPhi_4'(\lambda)=0$ if and only if either $\omega_+'(\lambda)=0$ or $\omega_-'(\lambda)=0$. The $L^2(\mathbb{R})$ essential spectrum of \eqref{eqn:L(NLS)0} bifurcates at a critical point of $\omega_\pm$ away from the imaginary axis if and only if $\omega_+(\lambda)$ or $\omega_-(\lambda)$ lies within the interval $(-2,2)$ respectively. The zeros of the bifurcation index do not lead to bifurcation when they are critical points of $\omega_\pm$ but $\omega_\pm(\lambda)$ lies outside the interval $(-2,2)$.

\begin{figure}[htbp]
\centering
\includegraphics[scale=0.75]{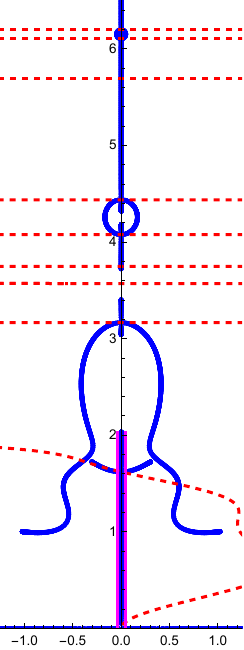}
\includegraphics[scale=0.75]{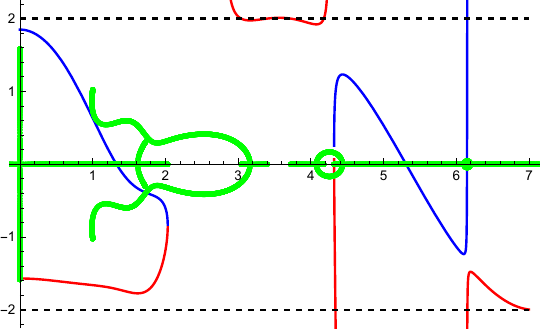}
\caption{On the left, the numerically computed ``frog'' spectrum of \eqref{eqn:eg(NLS)0}, alongside the intervals of algebraic multiplicity four and the graph of the bifurcation index.
On the right, the spectrum, rotated by $90^\circ$, together with the graphs of $\omega_{\pm}$.} 
\label{fig:Floquet4Spectrum}
\end{figure}

For example, Figure~\ref{fig:Floquet4Spectrum} presents the result from a numerical experiment with
\begin{equation}\label{eqn:eg(NLS)0}
\begin{aligned}
&\lambda v_1={v_2}_{xx}+(6-\cos(x)+3\sin(2x))v_2, \\
& \lambda v_2=-{v_1}_{xx}-(4-3\cos(x)+2\sin(2x))v_1. 
\end{aligned}
\end{equation}
In the left panel, the blue curves represent the numerically computed $L^2(\mathbb{R})$ essential spectrum for $2000$ values of the Floquet exponent for each $v_1$ and $v_2$. Similar to Sections~\ref{sec:KdV numerics} and \ref{sec:BBM numerics}, the magenta lines, running parallel to the imaginary axis, indicate intervals of algebraic multiplicity four. The numerical result supports this, revealing that for $\lambda \in i\mathbb{R}$ in the interval $(-\lambda_0 i,\lambda_0 i)$, where $\lambda_0$ is approximately $2$, the algebraic multiplicity is four. Notably, the spectrum includes the interval approximately $(-1.58,1.58)$ in the real axis.

The dashed red curves, representing the graph of the bifurcation index, intersect the imaginary axis, identifying potential bifurcation points where the spectrum might move away from the imaginary axis. It is evident that the bifurcation index $\varPhi_4(\lambda)=0$ at each of these bifurcation points along the imaginary axis. Unlike third-order equations, however, $\varPhi_4(\lambda)=0$ is a necessary condition but not a sufficient condition for such bifurcations. The left panel displays 10 points where the bifurcation index becomes zero, yet only seven of these points result in actual bifurcations, including the interval along the real axis. The three points where the bifurcation index vanishes but does not lead to spectral bifurcations are approximately at $3.6i$, $3.75i$, and $5.7i$. 

The right panel of Figure~\ref{fig:Floquet4Spectrum} shows the spectrum in green, rotated by $90^\circ$, alongside the graphs of $\omega_{\pm}$ in blue and red, respectively. We numerically observe that the algebraic multiplicity of $\lambda \in i\mathbb{R}$ changes when $\omega_\pm(\lambda)$ exit the interval $[-2,2]$, either by remaining real and exiting the interval or by colliding and becoming complex. Additionally, we numerically observe that the critical points of $\omega_{\pm}$ within the interval $(-2,2)$ correspond to spectral bifurcations away from the imaginary axis. Three critical points of $\omega_{\pm}$ occur when $\omega_\pm$ is outside of the interval $(-2,2)$, approximately at $3.6i$, $3.75i$, and $5.7i$, but they do not lead to bifurcations. The critical point of $\omega_-$ at around $3.6i$ is visible, while the other two critical points outside of $(-2,2)$ are not visible at this scale.

\subsection{Nontrivial phase solutions}\label{sec:NLSn0}

We turn our attention to the nonlinear Schr\"odinger equation
\begin{equation}\label{eqn:NLS}
i \psi_t = \psi_{xx} + g(|\psi|^2)\psi
\end{equation}
for some nonlinearity $g$, and nontrivial phase solutions of the form 
\[
\psi(x,t)= A(x)e^{iK(x)+i\omega t}
\quad \text{for some functions $A(x)$ and $K(x)$}.
\]
As is the case for the KdV equation the stability of the traveling wave solutions to the NLS equation has been extensively studied: see \cite{BuslaevSulem03, CazenaveLions82,GallayHaragus07,GallayHaragus2007,Weinstein1985} for some examples.

The spectral problem for \eqref{eqn:NLS} concerning a nontrivial phase solution does not possess the additional symmetry present in the spectral problem for a trivial phase solution. Consequently, the trace of the monodromy matrix $\tr(\mathbf{M}(\lambda))$ takes complex values. However, it is noteworthy that $\frac{1}{2}(\tr(\mathbf{M}(\lambda))^2-\tr(\mathbf{M}^2(\lambda)))$ is necessarily real for $\lambda \in i\mathbb{R}$. 

In what follows, we select $g(x)=3x^2$---namely, the quintic focusing nonlinearity. 
A direct calculation leads to
\[
A_{xx}-\frac{\kappa^2}{A^3}+\omega A+3A^5=0
\quad\text{and}\quad K_x=\frac{\kappa}{A^2}.
\]
Here we select $\kappa=\frac{3\sqrt{2}}{8}$ and $\omega = \frac{31}{16}$. After integration, we obtain
\[
A_x^2=-\frac{21}{32}-\frac{9}{32 A^2}+\frac{31}{16}A^2-A^6.
\]
The turning points, which are the zeros of the right side, include $A=\pm1$, $\pm\frac{i}{2}$, $\pm\frac{\sqrt{6}}{2}$, and $\pm i\frac{\sqrt{6}}{2}$, with a period approximately $1.93$.

\begin{figure}[htbp]
\centering
\includegraphics[width=0.45\textwidth]{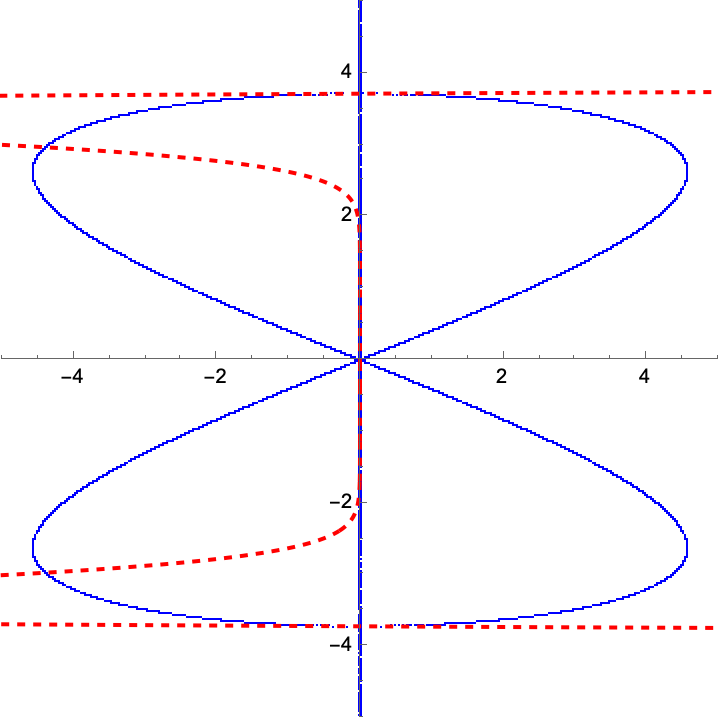}
\caption{The numerically computed spectrum of the linearized operator of the focusing quintic Schr\"odinger equation about a nontrivial phase solution.} 
\label{fig:NLS_Phase}
\end{figure}

Figure~\ref{fig:NLS_Phase} depicts the numerically computed $L^2(\mathbb{R})$ essential spectrum of the linearization of \eqref{eqn:NLS} with the quintic focusing nonlinearity about a nontrivial phase solution. The numerical result shows that in the depicted range, the imaginary axis is included in the spectrum with an algebraic multiplicity two. The dashed red curves represent the graph of the bifurcation index, and their intersections with the imaginary axis identify potential bifurcation points. The numerical result confirms that the spectrum bifurcates from the imaginary axis at $0$ and approximately $\pm 3.72 i$. 
 
\subsection{The Boussinesq equation}\label{sec:Boussinseq}

Another illustrative example is the Boussinesq equation
\begin{equation}\label{eqn:bous}
u_{tt} -u_{xx}+u_{xxxx}-g(u)_{xx} = 0
\end{equation}
for some appropriate nonlinearity $g$ or, alternatively,
\begin{equation} \label{eqn:bous'}
u_{tt}-2cu_{xt}+(c^2-1)u_{xx}+u_{xxxx}-g(u)_{xx}=0
\end{equation}
in the frame of reference moving at some $c\neq0, \in \mathbb{R}$, the wave speed. Typically, the nonlinearity $g(u)$ is chosen to be $u^2$, but other choices are possible. Let $\phi(x)$ denote a standing wave solution of \eqref{eqn:bous'} or, equivalently, $\phi(x-ct)$ represents a traveling wave solution of \eqref{eqn:bous}. We assume that $\phi(x+T)=\phi(x)$ for some $T>0$, the period.

Linearizing \eqref{eqn:bous'} about a standing wave solution $\phi(x)$ leads to\cite{Quintero03,stefanov_2014}
\begin{equation}\label{eqn:L(bous)}
\lambda^2 v-2c\lambda v_x+v_{xxxx}+(c^2-1)v_{xx}-(g(\phi)v)_{xx}=0,\qquad \lambda\in\mathbb{C}.
\end{equation} 
Introducing
\begin{align*}
&w=v_{xx}-(g(\phi)+1 -c^2)v:=v_{xx}-Q(x)v, \\ 
&v_1=v_x \quad\text{and}\quad -\lambda^2 w_1=w_x,
\end{align*}
and we can reformulate \eqref{eqn:L(bous)} as 
\begin{equation}\label{eqn:A(bous)}
\begin{pmatrix}
v \\ w \\ v_1 \\ w_1 
\end{pmatrix}_x
=\begin{pmatrix}
0 & 0 & 1 & 0 \\ 0 & 0 & 0 & -\lambda^2 \\ Q(x) & 1 & 0 & 0 \\ 1 & 0 & -\tfrac{2c}{\lambda} & 0 
\end{pmatrix}
\begin{pmatrix}
v \\ w \\ v_1 \\ w_1 
\end{pmatrix}.
\end{equation}
In short, 
\[
\mathbf{v}_x=\mathbf{A}(x,\lambda)\mathbf{v}.
\]
We verify that \text{(A1)} and \text{(A2)} hold for
\[
\mathbf{B}(\lambda)=\begin{pmatrix}
8 c^3 & - 2c & \lambda & - 4 c^2 \lambda \\ 
-2 c & 0 & 0 & \lambda \\ 
-\lambda & 0 & 0 & 0 \\ 
4 c^2 \lambda & - \lambda & 0 & 2c \lambda^2
\end{pmatrix},
\quad \text{where} \quad
\mathbf{B}(\lambda)^{-1} = 
\begin{pmatrix}
0 & 0 & -\tfrac1\lambda & 0 \\ 
0 & -2c & 0 & -\tfrac1\lambda \\ 
\tfrac1\lambda & 0 & 0 &-\tfrac{2c}{\lambda^2} \\ 
0 & \tfrac1\lambda & -\tfrac{2c}{\lambda^2} & 0 
\end{pmatrix}.
\]
Note that \eqref{eqn:A(bous)} is infinitesimally symplectic when $c =0$. Indeed, $\mathbf{A}(x,-\lambda)=\mathbf{A}(x,\lambda)$ and $\mathbf{B}(\lambda)=\begin{pmatrix} \mathbf{0} & \lambda\mathbf{I}_2 \\ -\lambda\mathbf{I}_2 & \mathbf{0}\end{pmatrix}$ when $c=0$. However, rather than having additional symmetry as seen in the case of the nonlinear Schr\"odinger equation, it appears that the symmetry reduces to the symplectic one. 

\begin{figure}[htbp]
\centering
\includegraphics[width=0.45\textwidth]{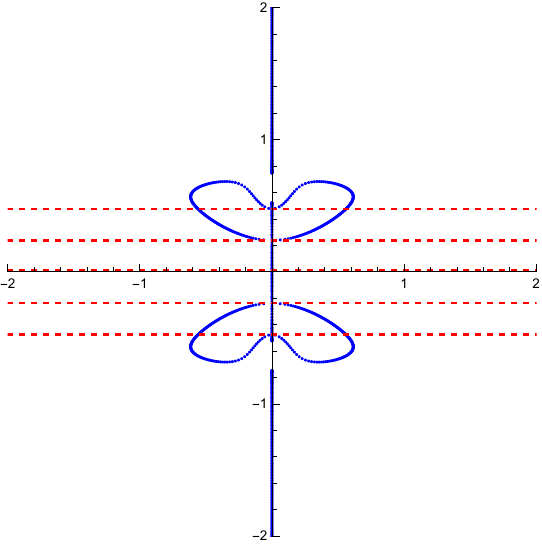}
\caption{The numerically computed ``maple seed'' spectrum for \eqref{eqn:A(bous)} for \eqref{def:Q(bous)}. The spectrum is depicted in blue, and the bifurcation index in red dashed.} 
\label{fig:Boussinesq4Wingnut}
\end{figure}

Figure~\ref{fig:Boussinesq4Wingnut} presents the result from a numerical experiment with 
\begin{equation}\label{def:Q(bous)}
Q(x)= 5\cos(x)+\sin(2x)\quad\text{and}\quad c=1.
\end{equation}
We pause to remark that this potential function does not arise in the stability problem for periodic traveling waves of the Boussinesq equation, as far as we are aware. The blue curves are the numerically computed $L^2(\mathbb{R})$ essential spectrum of \eqref{eqn:A(bous)} for \eqref{def:Q(bous)}. We numerically observe that within the depicted range, the imaginary axis is included in the spectrum, and the algebraic multiplicity is zero or two. In fact, no portion of the spectrum has an algebraic multiplicity four. The dashed red lines represent the graph of the bifurcation index, and their intersections with the imaginary axis indicate potential bifurcation of the spectrum away from the imaginary axis.

\begin{figure}[htbp]
\centering
\includegraphics[width=0.45\textwidth]{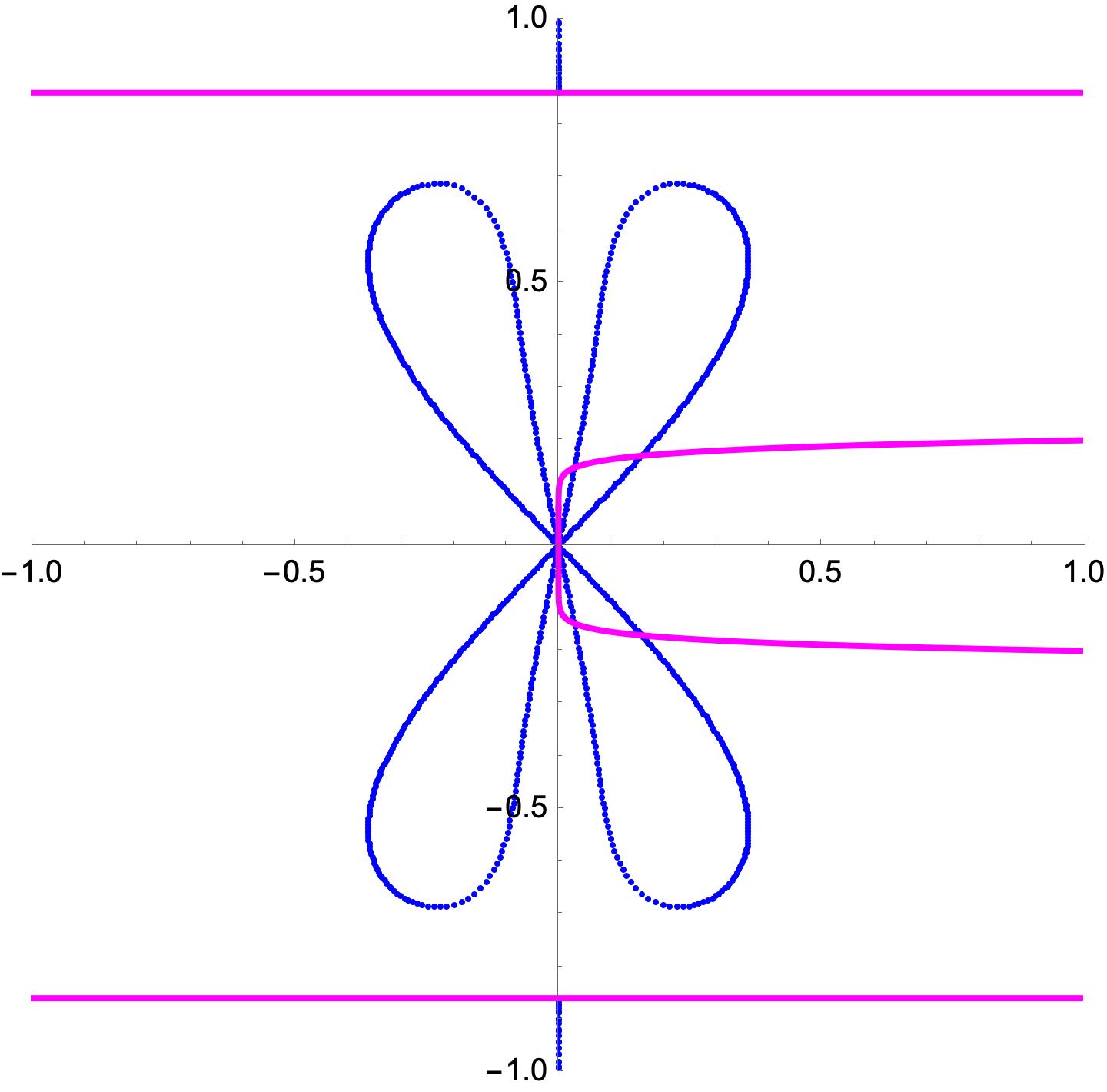}
\caption{The numerically computed spectrum of the linearized operator of \eqref{eqn:bous'2} about \eqref{eqn:phi(bous)}. The magenta curves represent the sign of the discriminant of the characteristic polynomial.} 
\label{fig:BoussinesqMultiplicityZero}
\end{figure}

Let's consider the Boussinesq equation with a cubic power-law nonlinearity
\begin{equation}\label{eqn:bous'2}
u_{tt}-u_{xx}+u_{xxxx}-(u^3-u)_{xx}=0
\end{equation}
and a periodic traveling wave solution in closed form
\begin{equation}\label{eqn:phi(bous)}
u(x,t)=\tfrac{\sqrt{7}}{2}\operatorname{dn}\Big(\sqrt{\tfrac78}(x-t),\tfrac67\Big).
\end{equation}
Here ``$\operatorname{dn}$'' represents a Jacobi elliptic function, the delta amplitude. Note that \eqref{eqn:phi(bous)} oscillates between a minimum of $\tfrac12$ and a maximum of $\tfrac{\sqrt{7}}2$ with a period $2 \sqrt{\tfrac87}\operatorname{K}(\tfrac67)\approx 5.156$, where $\operatorname{K}$ is the complete elliptic integral of the first kind. 

Figure~\ref{fig:BoussinesqMultiplicityZero} shows the numerically computed $L^2(\mathbb{R})$ essential spectrum of the linearization operator of \eqref{eqn:bous'2} about \eqref{eqn:phi(bous)}. The magenta curves represent the sign of the discriminant of the characteristic polynomial. The numerical result reveals that the algebraic multiplicity is either zero or four when the discriminant is positive, while the multiplicity is two when the discriminant is negative. The transition between multiplicities zero and two is detected by the change in the sign of the discriminant. There do not appear to be any intervals where the multiplicity is four, and the bifurcation index seems to have a simple zero at $0\in\mathbb{C}$. 

\subsection{Spectrum along the imaginary axis towards $\pm i\infty$}\label{sec:asym4}

When applying \eqref{eqn:res(infty)} to a fourth-order equation, we obtain 
\[
\res_4(\lambda)\sim\frac{T^4\sin^2(\sqrt[4]{\lambda}T)\sinh^2(\sqrt[4]{\lambda}T)(\cos\small(\sqrt[4]{\lambda}T\small)-\cosh\small(\sqrt[4]{\lambda}T\small))^4}{\lambda^3},
\]
which is generally not real even if $\lambda$ belongs to the imaginary axis. This reflects the fact that the limiting eigenvalue problem, which is the ODE with $\lambda$ as a parameter solved by the leading-order WKB approximations, is generally not Hamiltonian for a spectral problem of even order. Consequently, we do not expect to find information about spectrum and, hence, bifurcations along the imaginary axis. However, for the even-order equations discussed here, such as the Boussinesq equation and the nonlinear Schr\"odinger equation both with and without a phase, the WKB approximation gives a leading-order approximation in the spectral parameter that corresponds to the solutions of a Hamiltonian ODE. Therefore, in theory, we can produce leading-order approximations of the discriminant of the characteristic polynomial as well as the bifurcation index for such equations.

For the case of the nonlinear Schr\"odinger equation, both with and without a phase, we set $\lambda=i\nu^2$ for $\nu \gg 1$. Plugging this into the formula, we obtain the following leading-order expression for the discriminant of the characteristic polynomial:
\[
\Delta_4(i\nu^2) \sim -256\sin^2(\nu T) \sinh^2(\nu T)(\cos(\nu T)-\cosh(\nu T))^4 \leq0.
\]
Consequently, there are two eigenvalues of the monodromy matrix on the unit circle outside the discrete set of $\mathbb{C}$ for $\lambda \in i\mathbb{R}$ and $|\lambda|\gg1$. Furthermore, the leading-order expression for the bifurcation index is
\[
\varPhi_4(i\nu^2)\sim \frac{16T^4\sin^2(\nu T)\sinh^2(\nu T)(\cos(\nu T)-\cosh (\nu T))^4}{\nu^4}\geq 0.
\]
However, this vanishes to an even order when $\nu$ is an integer multiple of $\tfrac{2 \pi}{T}$, suggesting that bifurcations of isolas are possible near these points, depending on how the bifurcation index perturbs. Determining whether there is an infinite number of bifurcations along the imaginary axis would require higher-order WKB approximations. For the nonlinear Schr\"odinger equation, with or without a phase, the next-order non-vanishing WKB approximation depends on the potential, and it can be numerically determined in any particular case. However, to the best of our knowledge, it is not particularly amenable to general analysis. 

For the Boussinesq equation, the leading-order discriminant and bifurcation index are the same as those for the nonlinear Schr\"odinger equation. Therefore, higher-order WKB approximations, which depend on the potential, would be required to analyze the possibility of an infinite number of bifurcations.

\section{The Kawahara equation}\label{sec:5}

We conclude by considering a fifth-order KdV equation known as the Kawahara equation
\begin{equation}\label{eqn:Kawahara}
u_t-u_{xxxxx}+\alpha u_{xxx}+uu_x= 0,\qquad \alpha \in \mathbb{R}.
\end{equation}
As with previous equations there have been a number of papers that have considered the stability of traveling wave solutions to the Kawahara equation\cite{BuryakChampneys97,HaragusLombardiScheel06,levandosky07,Natali10,TanYangPelinovsky02,TrichtchenkoDeconinckKollar18}. 

We will begin by providing comprehensive discussion of periodic traveling waves of  \eqref{eqn:Kawahara}, given explicitly in terms of a Jacobi elliptic function. This expands on the treatment in \cite{KanoNakayama}. In the case of $\alpha=0$, \eqref{eqn:Kawahara} admits a one-parameter family of periodic stationary wave solutions given explicitly as
\begin{equation}\label{eqn:KdV5(0)}
\phi(x)= 420\sigma^4\operatorname{cn}^4\big(\sigma x,\tfrac12\big)-168\sigma^4, 
\qquad \sigma \in \mathbb{R}.
\end{equation}
Since \eqref{eqn:Kawahara} enjoys the same Galilean invariance as the generalized KdV equation, we can boost and translate the solution to obtain periodic traveling wave solutions. However, since these transformations do not alter the stability or instability of the solution, we will limit our discussion to the form \eqref{eqn:KdV5(0)}. 

For $\alpha\neq0$, periodic traveling wave solutions of \eqref{eqn:Kawahara} become more intricate due to the loss of scaling invariance present in the $\alpha=0$ case. Nevertheless, we obtain a one-parameter family of periodic stationary wave solutions, represented as
\begin{subequations}\label{eqn:Kawahara0}
\begin{equation}
\phi(x)=C_1+C_2\operatorname{cn}^2(\sigma x,m^*)+C_3\operatorname{cn}^4(\sigma x,m^*), 
\end{equation}
where
\begin{align}
&C_1=\frac{-31\alpha^2+264992\sigma^4{m^*}^2-264992\sigma^4 {m^*}-18928\sigma^4+3640 \alpha\sigma^2-7280\alpha\sigma^2 {m^*}}{507}, \notag\\
&C_2=-\frac{280}{13} \sigma^2 m^*(-\alpha +104\sigma^2 m^*-52\sigma^2), \\
&C_3=1680 \sigma^4 {m^*}^2. \notag
\end{align}
\end{subequations}
This family of traveling wave solutions is parameterized by $\sigma$ and, additionally, by Galilean and translational invariance, which we do not include here. It is worth noting that, unlike the $\alpha=0$ case, where $\sigma$ can take any real value, for $\alpha\neq 0$, $\sigma$ must satisfy $|\frac{\alpha}{\sigma^2}|<52$. Interestingly, the number $52$ is not a numerical approximation but the exact value, corresponding to the unique real root of 
\[
31x^3-56784x-1406080=0.
\] 
When $|\frac{\alpha}{\sigma^2}|<52$, \eqref{eqn:Kawahara0} defines periodic traveling waves of \eqref{eqn:Kawahara} and they are characterized by an elliptic parameter $m^*$ determined by the unique root of 
\begin{equation}\label{eqn:mPoly}
-703040\,(m-2)(m+1)(2 m-1)+56784\frac{\alpha}{\sigma^2}(m^2-m+1)-31\left(\frac{\alpha}{\sigma^2}\right)^3=0
\end{equation}
within the interval $m\in(0,1)$. It is important to note that changing the sign of $\alpha$ results in exchanging the parameter $m$ with its complementary parameter $m'=1-m$. This arises from the imaginary Jacobi transformation, which relates elliptic functions with argument $x$ and parameter $m$ to those with argument $ix$ and parameter $1-m$, together with the fact that under a complex rotation $x\mapsto ix$, we have $ \partial_{xx} \mapsto -\partial_{xx}$ and $ \partial_{xxxx} \mapsto \partial_{xxxx}$.

\begin{figure}[htbp]
\centering
\includegraphics[width=0.45\textwidth]{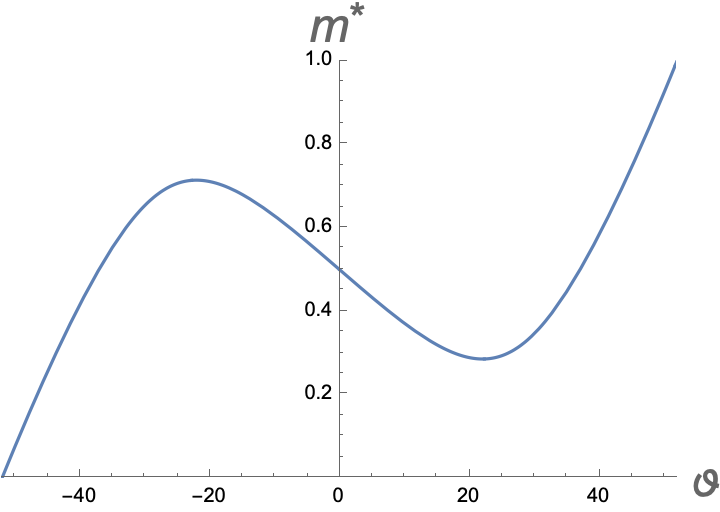}
\caption{$m^*$ as a function of $\frac{\alpha}{\sigma^2}$.}\label{fig:Kawam}
 \end{figure}

Figure~\ref{fig:Kawam} depicts the elliptic parameter $m^*$ as a function of $\frac{\alpha}{\sigma^2}$ within the range $(-52,52)$. The curve exhibits critical points $m^*\approx 0.285665$ and $m^*\approx 0.714335$, corresponding to the two roots of the polynomial 
\[
64 - 192 m - 391 m^2 + 1102 m^3 - 391 m^4 - 192 m^5 + 64 m^6,
\]
which is the discriminant of \eqref{eqn:mPoly} with respect to $\frac{\alpha}{\sigma^2}$. These critical points reside in the interval $(0,1)$. 

Additionally, we will rely on results regarding the real roots of a real quintic polynomial, following the notation and methodology in \cite{Chaundy}. To summarize, a real quintic polynomial
\[
p(x)=a_0x^5+5a_1x^4+10a_2x^3+10a_3x^2+5a_4x+a_5,
\]
$a_k \in \mathbb{R}$, can be transformed into a monic depressed quintic polynomial
\[
p_0(x)=x^5+10A_2x^3+10A_3x^2+5A_4z+A_5,
\]
using the change of variables $x\mapsto \frac{x-a_1}{a_0}$. Let
\begin{align*}
&\Delta_5=\disc(p_0), \\
&P_5=4A_2^3+A_3^2, \\
&D_5=A_5^2+16A_2A_4^2-76A_2A_3A_5-(272A_2^3-108A_3^2)A_4+24A_2^2(40A_2^3+27A_3^2). 
\end{align*}
Suppose $\Delta_5,P_5,D_5\neq0$. Similar to quartic polynomials, the roots of $p_0$ and, hence, $p$ have:
\begin{itemize}
\item five real roots if $\Delta_5>0$ and $P_5<0$, $D_5<0$, 
\item three real roots if $\Delta_5<0$,
\item one real root if $\Delta_5>0$ and either $P_5>0$ or $D_5>0$.
\end{itemize}
For simplicity, we omit discussion of various higher co-dimension possibilities, where one or more of $\Delta_5,P_5,D_5$ vanishes. It is worth noting that these quantities can be expressed entirely in terms of the Floquet discriminant for \eqref{eqn:Kawahara}, but the resulting analytical expressions are quite extensive, and we do not reproduce them there.  

\subsection{Numerical experiments}\label{sec:Kawahara numerics}

\begin{figure}[htbp]
\centering
\includegraphics[scale=0.35]{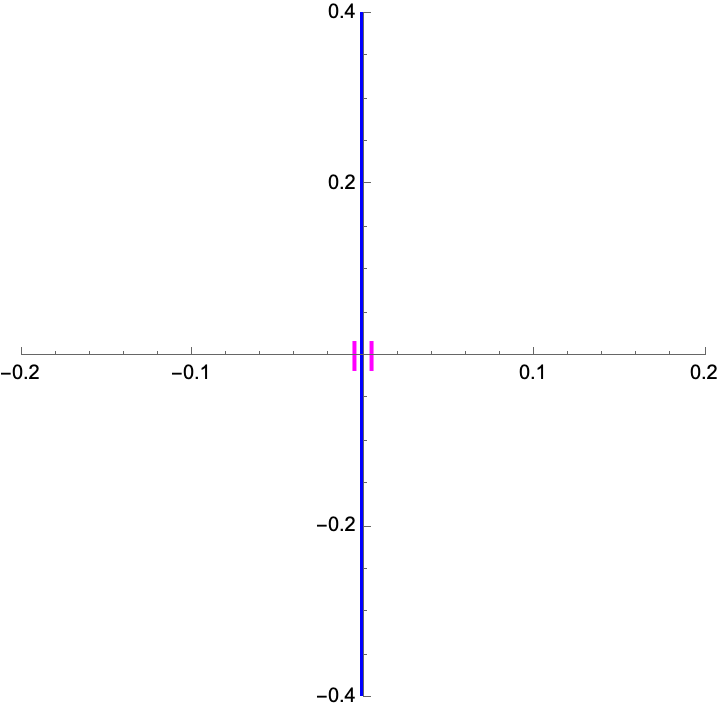}
\includegraphics[scale=0.75]{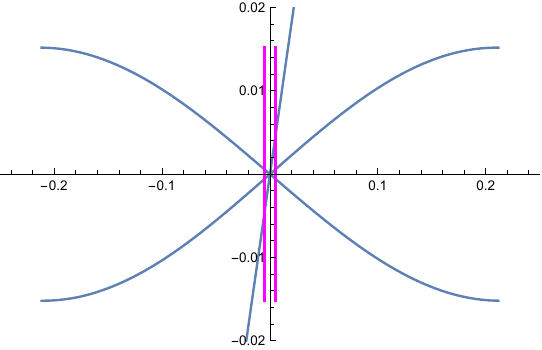}
\caption{(Left) The numerically computed spectrum of the linearization of \eqref{eqn:Kawahara} about \eqref{eqn:KdV5(0)} for $\alpha=0$ and $\sigma=\frac14$. (Right) A close-up near the interval of multiplicity five, alongside the graphs of the triple covering $\lambda_{-1}$, $\lambda_0$ and $\lambda_1$.}   
\label{fig:KawaharaSpectrum}
\end{figure}

We begin our numerical experiment by setting $\alpha=0$ and focusing on the periodic standing wave solutions in \eqref{eqn:KdV5(0)}. Note that neither the scaling invariance transformation $u(x,t) \mapsto \sigma^4 u(\sigma x, \sigma^9 t)$ nor the Galilean invariance transformation $u(x,t) \mapsto c + u(x-ct,t)$ affects the stability or instability of the underlying wave. Consequently, the results obtained in our numerics apply to the entire family of traveling waves under consideration. 

For our numerical computation, we have chosen $\sigma=\frac{1}{4}$. From general modulation theoretic considerations, we expect that at $0\in\mathbb{C}$, the associated monodromy matrix should have an eigenvalue $\mu=1$ with an algebraic multiplicity three. Our numerical findings indeed confirms this and the spectral problem is numerically stiff. Specifically, at $\lambda=0$, the eigenvalues of the monodromy matrix include $\mu=1$ with a multiplicity three, $\mu\approx 28284.5$ and $\mu \approx 3.536\times 10^{-5}.$

In Figure \ref{fig:KawaharaSpectrum}, the blue curves represent the numerically computed $L^2(\mathbb{R})$ essential spectrum of the linearized operator of \eqref{eqn:Kawahara}, without third-order dispersion, about \eqref{eqn:KdV5(0)} for $\sigma=\frac14$. On the left, the spectrum is observed to be entirely aligned with the imaginary axis, and all numerically computed eigenvalues have a maximum real part approximately $3.4\times10^{-10}$. The magenta lines, running parallel to the imaginary axis, indicate intervals where the spectrum has an algebraic multiplicity three. This interval is approximately $(-0.015 i, 0.015 i)$. To validate the determination of the region with multiplicity three, we compared it with the results obtained from the spectral computation. As the Floquet exponent $\mu$ varies over a range, it becomes evident that the interval $(-0.015 i, 0.015 i)$ is triply covered. The right panel shows the imaginary parts of the triple covering $\lambda_{-1}$, $\lambda_0$, and $\lambda_1$ as functions of $\mu$. The independently computed region of multiplicity three, utilizing the Floquet discriminant, is in magenta, and the two are in excellent agreement.

\begin{figure}[htbp]
\centering
\includegraphics[scale=0.35]{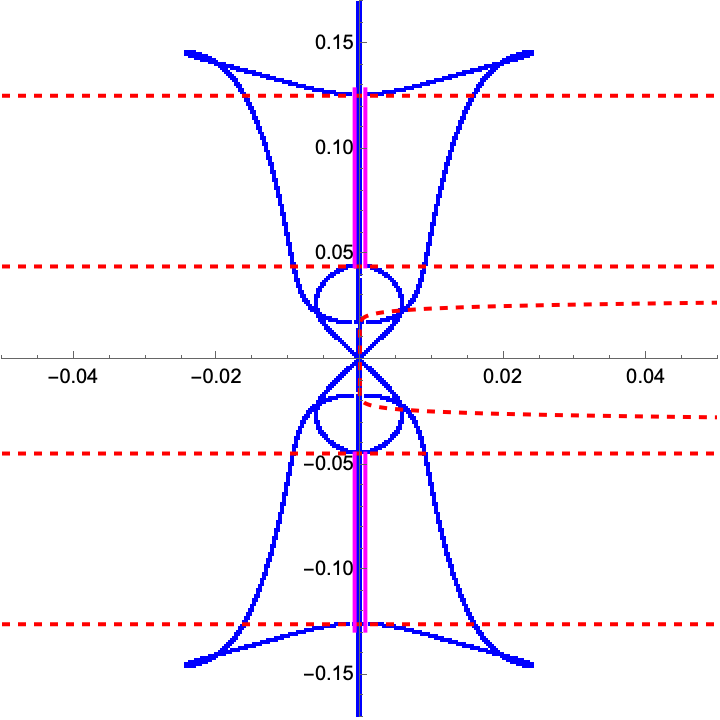}
\includegraphics[scale=0.45]{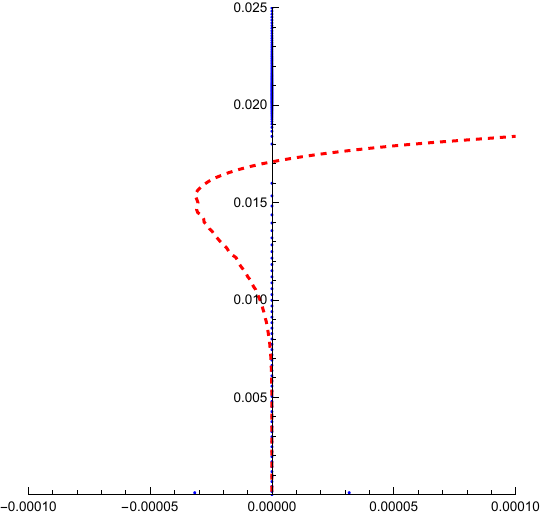}
\includegraphics[scale=0.45]{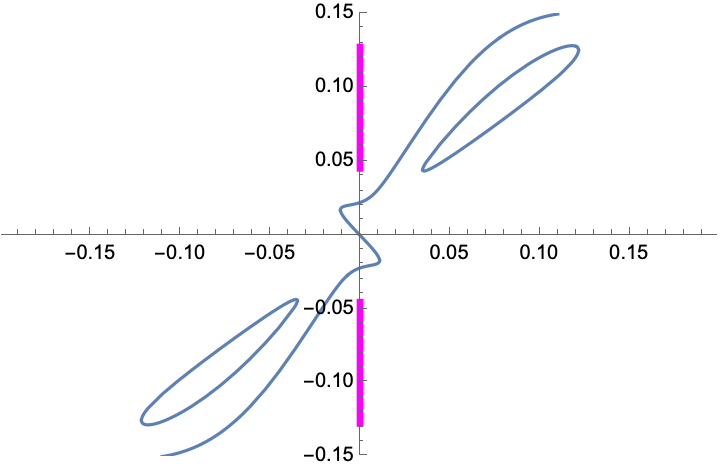}
\caption{(Left) The numerically computed spectrum of the linearization of \eqref{eqn:Kawahara} about \eqref{eqn:Kawahara0} for $\alpha=-2$ and $\sigma=\frac14$. (Middle) A close-up of the bifurcation index near $0\in\mathbb{C}$. (Right) The graphs of triple covering $\lambda_0$ and $\lambda_{\pm1}$ as functions of $\mu$. The magenta lines are the intervals in $i\mathbb{R}$ of multiplicity three. }   
\label{fig:KawaharaSpectrum2}
\end{figure}

Let's examine the spectral problem for \eqref{eqn:Kawahara} and the periodic traveling wave solution in \eqref{eqn:Kawahara0} for $\alpha=-2$ and $\sigma=\frac14$. This gives an elliptic parameter $m \approx .6185$. It is important to note that for this particular sign of $\alpha$, the two kinetic energy terms in the Hamiltonian, $\int u_x^2~dx$ and $\int u_{xx}^2~dx$, exhibit opposite signs. Based on physical intuition, the kind of competition between these two terms might lead to instabilities. Our numerical results indeed support the intuition. 

In the left panel of Figure~\ref{fig:KawaharaSpectrum2}, the blue curves represent the numerically computed $L^2(\mathbb{R})$ essential spectrum of the linearization of \eqref{eqn:Kawahara} about \eqref{eqn:Kawahara0} for $\alpha=-2$ and $\sigma=\frac14$. We numerically observe that the spectrum bifurcates from the imaginary axis at the origin and at six other points along the imaginary axis. Notably, the dashed red curves intersect the imaginary axis precisely at these bifurcation points. Within an interval along the imaginary axis around $0\in\mathbb{C}$, excluding the origin itself, the algebraic multiplicity is one. On the other hand, there exist two short intervals, in magenta, on the imaginary axis with a multiplicity three, away from $0\in\mathbb{C}$.  

On the right, the blue curves show the imaginary parts of three purely imaginary eigenvalues as functions of the Floquet exponent $\mu$, accompanied by subintervals on the imaginary axis where the multiplicity is three, in magenta. It is evident that the two are in very close agreement. It is worth emphasizing that these graphs were computed using different computer codes. The magenta intervals were determined using an ODE solver along the imaginary axis to compute the Floquet discriminant, while the blue curves were obtained through spectral decomposition together with an eigenvalue solver.

\subsection{Spectrum along the imaginary axis towards $\pm i\infty$}\label{sec:asym5}

Similar to the case of the generalized KdV equation, Theorem~\ref{thm:asym} applies to the Kawahara equation. Specifically, for $\lambda=i\nu^5$ and $\nu \gg 1$, the discriminant of the characteristic polynomial is approximated as
\begin{align*}
\Delta_5(i\nu^5)\sim&4096\left(\cos(\tfrac{1}{2}\sqrt{5}\nu T)
-\cosh(\tfrac{1}{2}\sqrt{5-2\sqrt{5}}\nu T)\right)^2 \\ 
& \times \left(\cos(\tfrac{1}{4}(5+\sqrt{5}) \nu T)
-\cosh(\tfrac{1}{2}\sqrt{\tfrac{1}{2}(5-\sqrt{5})} \nu T)\right)^2 \\ 
& \times \left(\cos(\tfrac{1}{4}(\sqrt{5}-5) \nu T)
-\cosh(\tfrac{1}{2}\sqrt{\tfrac{1}{2}(5+\sqrt{5})} \nu T)\right)^2 \\ 
& \times \left(\cos(\tfrac{1}{2}\sqrt{5} \nu T)
-\cosh(\tfrac{1}{2}\sqrt{5+2\sqrt{5}} \nu T)\right)^2 \\ 
& \times \sinh ^2\left(\tfrac{1}{2}\sqrt{\tfrac{1}{2}(5-\sqrt{5})} \nu T\right) 
\sinh^2\left(\tfrac{1}{2}\sqrt{\tfrac{1}{2}(5+\sqrt{5})} \nu T\right).
\end{align*}
While this expression may appear somewhat intricate, it is real and positive for all real values of $\nu$ for all $T>0$. Subsequently, \eqref{eqn:res(infty)} provides the formula for the bifurcation index. Our verification confirms, as with the generalized KdV equation, that only a finite number of isolas bifurcate away from the imaginary axis for the Kawahara equation.

\bibliographystyle{amsplain} 
\bibliography{HamiltonianFloquet}

\end{document}